\documentclass[12pt,a4paper]{amsart}
\pdfoutput=1

\usepackage[utf8]{inputenc}
\usepackage[T1]{fontenc}
\usepackage{lmodern}
\linespread{1.04}
\frenchspacing
\usepackage[kerning, spacing, tracking]{microtype}



\usepackage[style=numams, 
            sorting=nyt, 
            isbn=false,  
            backref=true, 
            backend=biber,
           ]{biblatex}
\addbibresource{mcmullennotes.bib}


\usepackage{amsmath,amssymb, amsthm}
\usepackage{mathtools}
\usepackage{enumitem}

\usepackage[colorlinks, 
            linkcolor=blue, 
            citecolor=blue, 
            pdfa%
           ]{hyperref}

\usepackage[hcentering, hscale=0.7, vscale=0.79 ]{geometry}






\newtheorem*{mainthm}{Main Theorem}

\swapnumbers
\newtheorem{thm}{Theorem}[section]
\newtheorem{cor}[thm]{Corollary}
\newtheorem{lemma}[thm]{Lemma}
\newtheorem{obs}[thm]{Observation}
\theoremstyle{definition}
\newtheorem{example}[thm]{Example}
\theoremstyle{remark}
\newtheorem{remark}[thm]{Remark}


\newlist{enumthm}{enumerate}{1}  
\setlist[enumthm,1]{label=\textup{(\roman*)}}


\renewcommand{\phi}{\varphi}
\renewcommand{\theta}{\vartheta}

\renewcommand{\geq}{\geqslant}
\renewcommand{\leq}{\leqslant}


\newcommand{\nbd}{\nobreakdash-\hspace{0pt}}  
\newcommand{\defemph}[1]{\textbf{#1}} 


\newcommand{\nats}{\mathbb{N}}
\newcommand{\ints}{\mathbb{Z}}
\newcommand{\reals}{\mathbb{R}}
\newcommand{\compl}{\mathbb{C}}
\newcommand{\quats}{\mathbb{H}}

\newcommand{\eps}{\varepsilon}
\newcommand{\iso}{\cong}    
\newcommand{\into}{\hookrightarrow}  

\newcommand{\dblcs}[3]{#1 \setminus #2 / #3} 

\DeclarePairedDelimiter{\abs}{\lvert}{\rvert}
\DeclarePairedDelimiter{\norm}{\lVert}{\rVert}
\DeclarePairedDelimiter{\erz}{\langle}{\rangle}
\DeclarePairedDelimiter{\ipcf}{\lbrack}{\rbrack}  

\DeclarePairedDelimiter{\scp}{\langle}{\rangle} 
\newcommand{\lipr}[1]{\scp{#1}_{\Lambda}}        
%
%
\DeclareMathOperator{\Ker}{Ker}      

\DeclareMathOperator{\GL}{GL}
\DeclareMathOperator{\SL}{SL}
\DeclareMathOperator{\PSL}{PSL}
\DeclareMathOperator{\PGL}{PGL}
\DeclareMathOperator{\Og}{\mathbf{O}}  

\DeclareMathOperator{\Z}{\mathbf{Z}}         
\DeclareMathOperator{\Irr}{Irr}
\DeclareMathOperator{\Fix}{Fix}
\DeclareMathOperator{\ord}{ord}
\DeclareMathOperator{\mat}{\mathbf{M}}      
\DeclareMathOperator{\enmo}{End} 

\DeclareMathOperator{\tr}{tr}
\DeclareMathOperator{\id}{id}       

\DeclareMathOperator{\RC}{\mathcal{RC}}

%


\hypersetup{pdfauthor={Frieder Ladisch},
            pdftitle = {Realizations of abstract regular polytopes
                    from a representation theoretic view},
            pdfkeywords={Real representations of finite groups,
                         abstract regular polytope, 
                         realization cone, 
                         C-string group}
           }



\begin{document}
\title[Realizations of abstract regular polytopes%
      ]{Realizations of  
        abstract regular polytopes \\
        from a representation theoretic view%
       }
\author{Frieder Ladisch}
\thanks{Author supported by the DFG (Project: SCHU 1503/6-1)}
\address{Universität Rostock \\
         Institut für Mathematik \\
         18051 Rostock \\
         Germany}
\email{frieder.ladisch@uni-rostock.de}
\subjclass{Primary 52B15, Secondary 20C15, 20B25}
\keywords{Real representations of finite groups,
          abstract regular polytope, 
          realization cone, 
          C-string group}
%

\begin{abstract}
  Peter McMullen has developed a theory of 
  realizations of abstract regular polytopes,
  and has shown that the realizations up to congruence form
  a pointed convex cone which is the direct product of certain
  irreducible subcones.
  We show that each of these subcones is isomorphic to a 
  set of positive semi-definite hermitian matrices of dimension $m$
  over either the real numbers, 
  the complex numbers or the quaternions.
  In particular, we correct an erroneous computation of the 
  dimension of these subcones by McMullen and Monson.
  We show that the automorphism group of an abstract regular 
  polytope can have an irreducible character $\chi$ with
  $\chi\neq \overline{\chi}$ and with arbitrarily large
  essential Wythoff dimension.
  This gives counterexamples to a 
  result of Herman and Monson, which was derived from 
  the erroneous computation mentioned before. 
  
  We also discuss a relation between cosine vectors of certain
  pure realizations and the spherical functions appearing in the 
  theory of Gelfand pairs.
\end{abstract}
\maketitle

\section{Introduction}
These notes are the result of an attempt to understand 
realizations of abstract regular 
polytopes, as introduced by 
Peter 
McMullen~\cite{mcmullen89,mcmullenmonson03,mcmullen11,mcmullen14},
from a representation theoretic viewpoint,
thereby showing that the theory actually generalizes to a theory 
of ``realizations of transitive $G$-sets''.
That the theory applies in this wider context
was already pointed out by McMullen~\cite[Remark~2.1]{mcmullen14}. 
In particular, we will derive the exact structure
of McMullen's \emph{realization cone} using arguments from
basic representation theory and linear algebra.

To explain this in more detail,
and to state our main theorem, we have to introduce some notation.
Let 
$G$ be a finite group and $\Omega$ a transitive $G$\nbd set.
(In the original theory, $\Omega$ is the vertex set of an
abstract regular polytope and $G$ the automorphism group
of the polytope. But this assumption is in fact 
unnecessary for a large part of the theory.)
In this situation, one can define a closed pointed convex cone
called the \emph{realization cone}
which describes \emph{realizations} of the transitive $G$\nbd set 
$\Omega$ up to congruence.
(We will recall the exact definitions below.)

Let us write $\Irr_{\reals}G$ for the set of characters
of irreducible representations over the real numbers $\reals$.
McMullen~\cite{mcmullen89} has shown that the realization cone
is the direct product of subcones, each subcone corresponding
to some $\sigma\in \Irr_{\reals} G$
(or, what is the same, to a similarity class of 
irreducible representations of $G$).
We write $\RC_{\sigma}(\Omega)$ for the subcone
corresponding to $\sigma\in \Irr_{\reals}G$.
The main new result of this note concerns the structure of such a
subcone.

To state this result, we need some more notation.
Let $\pi=\pi_{\Omega}$ be the permutation character corresponding 
to the $G$\nbd set $\Omega$.
We can write $\pi$ as a sum of irreducible real characters:
\[ \pi = \sum_{\sigma\in \Irr_{\reals} G} m_{\sigma} \sigma.
\]
The \emph{multiplicities} $m_{\sigma}$ are uniquely determined by 
this equation, 
and equal the \emph{essential Wythoff dimension}
defined by McMullen and Monson~\cite{mcmullenmonson03}.
Moreover, to each $\sigma\in \Irr_{\reals}(G)$
belongs a division ring $\mathbb{D}_{\sigma}$
(the centralizer ring of a representation affording $\sigma$),
which is isomorphic to either 
the real numbers $\reals$,
the complex numbers $\compl$
or the Hamiltonian quaternions $\quats$.

We write $\mat_{m}(\mathbb{D})$ for the ring of 
$m\times m$-matrices over $\mathbb{D}$,
and if $A\in \mat_m(\mathbb{D})$, 
then $A^{*}$ denotes the (complex/quaternion) conjugate transpose
of $A$ when $\mathbb{D}=\compl$ or $\quats$,
and the transpose of $A$ when $\mathbb{D}=\reals$.
With this notation, we have:
\begin{mainthm}
  The realization cone of\/ $\Omega$ is 
  the direct product of subcones $\RC_{\sigma}(\Omega)$
  corresponding to $\sigma\in \Irr_{\reals}G$,
  where each $\RC_{\sigma}(\Omega)$ is isomorphic to
  the set of matrices
  \[ 
       \{ AA^{*}\mid A\in \mat_{m_{\sigma}}(\mathbb{D}_{\sigma})
       \}.
  \]
\end{mainthm}
In other words, the subcone $\RC_{\sigma}(\Omega)$ 
is isomorphic to the set of
hermitian positive semi-definite 
$m_{\sigma}\times m_{\sigma}$\nbd matrices
with entries in $\mathbb{D}_{\sigma}$,
with appropriate meaning of ``hermitian'' 
(depending on whether 
 $\mathbb{D}_{\sigma}= \reals$, $\compl$ or $\quats$).

From the main theorem, one can immediately derive the dimension
of $\RC_{\sigma}(\Omega)$ (see Corollary~\ref{c:dimsubcone}).
This dimension has been computed by
McMullen and Monson~\cite[Theorem~4.6]{mcmullenmonson03}
(using different notation).
Unfortunately, the result of McMullen and Monson
only matches with our description 
when $m_{\sigma}\leq 1$ or when $\mathbb{D}_{\sigma} = \reals$.
If the computation of 
McMullen and Monson~\cite[Theorem~4.6]{mcmullenmonson03}
were correct 
in the original situation, where
$G$ is the automorphism group of an abstract regular polytope
with vertex set $\Omega$, 
then it would follow that 
we always have $m_{\sigma}\leq 1$ or 
$\mathbb{D}_{\sigma}=\reals$ for such $G$.
And indeed, this is the main result of a paper
by Herman and Monson~\cite{hermanmonson04}.
They derive this from~\cite[Theorem~4.6]{mcmullenmonson03} 
in a different way. 

But unfortunately, the main result of
Herman and Monson~\cite{hermanmonson04} is wrong:
We show in Section~\ref{sec:counterex} that we can have
$\mathbb{D}_{\sigma} = \compl$ and $m_{\sigma}$ arbitrarily large
even when $G$ is the automorphism group of an abstract regular
polytope with vertex set~$\Omega$.
(See Example~\ref{expl:pslconcrete} for a concrete case where
 $m_{\sigma}=2$.
 It seems to be unknown whether there are any
 abstract regular polytopes with $\mathbb{D}_{\sigma}\iso \quats$
 for some $\sigma$.) 
These examples show that the computation of McMullen and Monson must
be wrong even in the original setting.
At the end of Section~\ref{sec:structure},
we briefly discuss where we see the flaw in McMullen's 
and Monson's proof.

A later result of McMullen~\cite[Theorem~5.2]{mcmullen14} can be 
interpreted as saying that the subcone
$\RC_{\sigma}(\Omega)$ is isomorphic to 
the symmetric positive semi-definite matrices of size 
$m_{\sigma} \times m_{\sigma}$,
with entries in the reals. 
This is in general not correct, the correct statement is 
the main theorem stated above.

Another consequence of the mistake in~\cite{mcmullenmonson03}
is that the 
$\Lambda$-orthogonal basis described in~\cite{mcmullen14}
is in general too small.
In Section~\ref{sec:orthog}, we briefly discuss the relation between
McMullen's $\Lambda$\nbd inner product and some other natural
inner products, and indicate how to 
construct a complete orthogonal basis.

In Section~\ref{sec:spherical}, we discuss some relations
between McMullen's 
cosine vectors and the spherical functions
appearing in the theory of Gelfand pairs.
It turns out that when $(G,H)$ is a Gelfand pair
(where $H$ is the stabilizer of a vertex), then
the cosine vectors are in principle the same as 
the spherical functions.
(This applies to all classical regular polytopes
in euclidean space, except the $120$-cell.)
We show that the values of cosine vectors are algebraic numbers, 
when the essential Wythoff dimension is $1$. 
This was conjectured by McMullen~\cite[Remark~9.4]{mcmullen14}.
Indeed, multiplied with the size of the corresponding layer,
we get an algebraic \emph{integer}.

Finally, in Section~\ref{sec:600cell} we propose an explanation
of an observation of McMullen~\cite[Remark~9.3]{mcmullen11} 
about the cosine vectors of
the $600$\nbd cell.

\section{Realizations as 
   \texorpdfstring{$G$-homomorphisms}{G-homomorphisms}}
Let $G$ be a finite group.
For convenience, we use the following terminology:
An \defemph{euclidian $G$-space} is an euclidean vector space
$(V,\langle \cdot, \cdot \rangle)$ 
on which the group~$G$ acts by orthogonal transformations.
The action is denoted by $(v,g)\mapsto vg$.
Equivalently, we are given an orthogonal representation
$D\colon G\to \Og(V)$, so that $D(g)$ is the map
$v\mapsto vg = vD(g)$.

Let $\Omega$ be a transitive (right) $G$\nbd set.
A \defemph{realization} of $(G,\Omega)$ 
is a map $A\colon \Omega \to V$ into 
an euclidean $G$\nbd space $V$ such that
$(\omega g)A = (\omega A) g$
for all $\omega\in \Omega$ and $g\in G$.
This definition agrees with
McMullen's definition~\cite{mcmullen89, mcmullen11, mcmullen14}
in the case where $G$ is the automorphism group 
of an abstract regular polytope
with vertex set $\Omega$.
We emphasize that in this paper,
$G$ is just some finite group and $\Omega$ a
transitive $G$\nbd set.
For example, we could take $\Omega=G$, on which $G$ acts by right
multiplication.

Two realizations 
$A_1\colon \Omega \to V_1$ and
$A_2\colon \Omega \to V_2$ 
are called \defemph{congruent}, if there is a linear isometry
$\sigma$ from the linear span of
$\{\omega A_1\mid \omega\in \Omega\}$
into $V_2$ such that
$A_1\sigma = A_2$.
(A peculiarity of this definition is that the realization
$\Omega \to \reals$ sending every $\omega\in \Omega$ to $0$
is \emph{not} congruent to the realization sending every
$\omega \in \Omega$ to $1$.
It turns out to be useful to distinguish these.)
The following is easy to see: 
\begin{lemma}
  Two realizations
  $A_1\colon \Omega \to (V_1, \langle \cdot, \cdot \rangle_1)$ and
  $A_2\colon \Omega \to (V_2, \langle \cdot, \cdot \rangle_2)$ 
  are congruent if and only if
  $\langle \xi A_1, \eta A_1\rangle_1
   = \langle \xi A_2, yA_2 \rangle_2$
  for all $\xi$, $\eta \in \Omega$.
\end{lemma}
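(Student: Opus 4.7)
The "only if" direction is immediate: if $\sigma$ is a linear isometry with $A_1\sigma = A_2$, then
\[
  \langle \xi A_2, \eta A_2\rangle_2
   = \langle (\xi A_1)\sigma, (\eta A_1)\sigma \rangle_2
   = \langle \xi A_1, \eta A_1\rangle_1
\]
for all $\xi,\eta\in\Omega$, using that $\sigma$ preserves inner products on its domain.

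For the "if" direction, the plan is to construct $\sigma$ explicitly on the generators $\omega A_1$ and extend linearly. Set $U_i := \mathrm{span}\{\omega A_i \mid \omega\in\Omega\}\subseteq V_i$ for $i=1,2$, and define $\sigma\colon U_1 \to U_2$ on spanning vectors by $(\omega A_1)\sigma := \omega A_2$, extended by linearity to finite sums $\bigl(\sum_\omega c_\omega\, \omega A_1\bigr)\sigma := \sum_\omega c_\omega\, \omega A_2$.

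The one nontrivial point is well-definedness. Suppose $v = \sum_\omega c_\omega\, \omega A_1 = 0$ in $V_1$. Then, using bilinearity of the inner products together with the hypothesis $\langle \xi A_1,\eta A_1\rangle_1 = \langle \xi A_2,\eta A_2\rangle_2$, one computes
\[
  \Bigl\| \sum_\omega c_\omega\, \omega A_2 \Bigr\|_2^{\,2}
    = \sum_{\omega,\omega'} c_\omega c_{\omega'}\,
      \langle \omega A_2, \omega' A_2\rangle_2
    = \sum_{\omega,\omega'} c_\omega c_{\omega'}\,
      \langle \omega A_1, \omega' A_1\rangle_1
    = \|v\|_1^{\,2} = 0,
\]
so $\sum_\omega c_\omega\, \omega A_2 = 0$ as well. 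Hence $\sigma$ is a well-defined linear map $U_1\to U_2$, and the same calculation without the assumption $v=0$ shows that $\sigma$ preserves the inner product on all of $U_1$, i.e.\ it is a linear isometry into $V_2$. By construction $A_1\sigma = A_2$, completing the proof.

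I expect no real obstacle here; the only thing one has to be careful about is not to assume $\sigma$ is given on all of $V_1$ — it is defined only on $U_1$, which is exactly what the definition of congruence demands. Note also that $G$-equivariance of $\sigma$ is not required by the definition, but is automatic from the construction since the generating set $\{\omega A_i\}$ is $G$-stable and $\sigma$ sends the $G$-orbit in $U_1$ to the corresponding orbit in $U_2$.
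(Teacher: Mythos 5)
Your proof is correct. The paper states this lemma without proof (``The following is easy to see''), and your argument --- the immediate ``only if'' direction, plus defining $\sigma$ on the spanning set $\{\omega A_1\}$ and verifying well-definedness and the isometry property via the norm computation --- is exactly the standard argument the author is leaving to the reader.
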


Thus a realization $A\colon \Omega\to V$
is determined up to congruence by the $\Omega\times \Omega$ matrix
$Q = Q(A) $ with entries
$q_{\xi,\eta }= \langle \xi A, \eta A \rangle$.
We call $Q$ the \defemph{inner product matrix} of 
the realization~$A$.
It is 
a symmetric positive semi-definite matrix
and $G$\nbd invariant in the sense that
$q_{\xi g,\eta g}= q_{\xi,\eta }$.

\begin{remark}\label{rem:inner_prod_vecs}
McMullen~\cite{mcmullen11} uses 
\emph{inner product vectors} instead of inner product matrices.
The relation is as follows:
A \defemph{diagonal class} is an orbit of $G$ on the set of
unordered pairs on $\Omega$. 
Since the inner product matrix $Q=(q_{\xi,\eta})$ 
is symmetric and $G$\nbd invariant,
the map $\{\xi,\eta\}\mapsto q_{\xi,\eta}$
is well defined and constant along diagonal classes.
Thus it is determined by its values on a set
of representatives of the diagonal classes.

Now fix some ``initial'' vertex $\alpha\in \Omega$.
A \defemph{layer} is the set of all elements
$\omega\in \Omega$ such that $\{\alpha,\omega\}$
belongs to the same diagonal class.
Choose a set of representatives 
$\xi_0 = \alpha$, $\xi_1$, $\dotsc$, $\xi_r$
of the layers in $\Omega$.
Then the unordered pairs 
$\{\alpha,\xi_i\}$ represent all diagonal classes
(as $\Omega$ is a transitive $G$\nbd set).
The vector of length $r+1$ with values 
$q_{\alpha,\xi_i} = \langle \alpha A, \xi_i A \rangle $
as entries
is the \defemph{inner product vector} 
of the realization~\cite{mcmullen11}.
It is clear that the inner product matrix 
is determined by the inner product vector. 
For the purposes of this paper,
we find it more convenient to use 
the inner product matrix itself.
\end{remark}

The set of all inner product matrices of realizations of $\Omega$ 
is called the
\defemph{realization cone} of $\Omega$,
and denoted  by $\RC(\Omega)$ or $\RC(G,\Omega)$
(in the first variant,
the group $G$ is understood to be implicit in $\Omega$).
It is in bijection to the set of all congruence classes
of realizations.

The following operations on realizations show that 
the realization cone is indeed a cone:
First, if $A_1\colon \Omega\to V_1$ and
$A_2 \colon \Omega \to V_2$ are two realizations,
then their \defemph{blend}
is the realization
$A_1 \oplus A_2\colon \Omega \to V_1 \oplus V_2$
sending $\omega \in \Omega$ to
$(\omega A_1, \omega A_2)$ in the 
(outer) orthogonal  sum of the two euclidean spaces
$V_1$ and $V_2$.
(McMullen denotes the blend by
$A_1 \mathop{\#} A_2$.)
For the corresponding inner product matrices, we have
$Q(A_1 \oplus A_2) = Q(A_1)+ Q(A_2)$.

Second, we can scale realizations:
for $A\colon \Omega\to V$ and $\lambda\in \reals$,
$\lambda A\colon \Omega \to V$ is defined by
$\omega (\lambda A)= \lambda (\omega A)$.
Obviously, $Q(\lambda A) = \lambda^2 Q(A)$.

For completeness, we mention a third operation, 
the \emph{tensor product} 
$A_1\otimes A_2\colon \Omega \to V_1 \otimes V_2$
of two realizations $A_i \colon \Omega\to V_i$,
defined on $\Omega$ by
$\omega (A_1 \otimes A_2) 
 := (\omega A_1) \otimes (\omega A_2)$.
The inner product matrix 
$Q(A_1\otimes A_2)$ is the entry-wise (Hadamard) product
of $Q(A_1)$ and $Q(A_2)$.

It follows from blending and scaling that $\RC(\Omega)$ is a 
convex cone.
It is also clear that $\RC(\Omega)$ has an apex at $0$.

A realization $A\colon \Omega\to V$ is called
\defemph{normalized}, if 
$\norm{\omega A}^2 := \scp{\omega A, \omega A} = 1$ 
for some (and hence for all)
$\omega \in \Omega$.
If $\omega A\neq 0$, then we may scale
the realization by $1/\norm{\omega A}$,
so that it becomes normalized.
The inner product matrix of the normalized realization
$(1/\norm{\omega A}) A$
is called its \defemph{cosine matrix}, for obvious reasons.
The set of cosine matrices of realizations forms 
a compact convex set. 
\begin{remark} 
As in Remark~\ref{rem:inner_prod_vecs}, a cosine matrix corresponds
to a \defemph{cosine vector}, which contains the values
$\scp{\alpha A, \xi_i A}/\scp{\alpha A, \alpha A}$, where $\xi_i$ 
runs over a set of representatives of the layers.
We have to caution the reader that McMullen~\cite{mcmullen14}
uses the term \emph{cosine matrix} with a different meaning:
In~\cite{mcmullen14}, this is a square matrix whose rows are cosine
vectors of different realizations
(and maybe certain \emph{mixed cosine vectors}), and such that
the rows are orthogonal with respect to a certain inner product
($\Lambda$-orthogonality, see Section~\ref{sec:orthog} below).
This matrix is similar to the character table of a finite group,
and thus we find the name ``cosine table'' more appropriate
for this object.
\end{remark}

An especially important realization is the
\defemph{simplex realization} which we now define.
Recall that the permutation module $\reals \Omega$ 
over $\reals$ belonging to the $G$\nbd set  $\Omega$ 
is the set of formal sums
\[ \reals\Omega :=
   \{\sum_{\omega\in \Omega} r_{\omega} \omega
           \mid r_{\omega}\in \reals 
   \}, 
\]
on which $G$ acts by 
$(\sum r_{\omega}\omega)g = \sum r_{\omega}(\omega g)$.
Also we think of $\reals \Omega$ as equipped with the 
standard scalar product
\[ \scp*{\sum_{\omega} r_{\omega} \omega, 
        \sum_{\omega} s_{\omega} \omega
       }
   = \sum_{\omega} r_{\omega}s_{\omega}.
\]
This makes $\reals \Omega$ into an euclidean $G$\nbd space.
The natural map $\Omega \into \reals \Omega$
is a realization, called the
\emph{simplex realization}.
(We usually identify its image, the canonical basis of
$\reals\Omega$, with the set $\Omega$.)

The next observation is obvious, but crucial for our 
proof of the structure theorems in the next section.
Recall that a linear map
 $\widehat{A}\colon U\to V$ between two $ G$\nbd modules is a
 $G$\nbd module homomorphism if
  $ug\widehat{A}= u\widehat{A} g$ for all
  $u\in U$ and $g\in G$.
Since $\Omega$ is a basis of $\reals \Omega$,
we have the following:
\begin{obs}
  Realizations $A\colon \Omega \to V$ correspond to
  $G$\nbd module homomorphisms 
  $\widehat{A}\colon \reals \Omega \to V$.
\end{obs}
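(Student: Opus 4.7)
The plan is to exploit the universal property of $\reals\Omega$ as the free $\reals$\nbd module on the set $\Omega$. Given any set-theoretic map $A\colon \Omega\to V$, there is a unique $\reals$\nbd linear extension $\widehat{A}\colon \reals\Omega \to V$ defined by $(\sum_\omega r_\omega \omega)\widehat{A} = \sum_\omega r_\omega(\omega A)$. Conversely, any $\reals$\nbd linear map $\widehat{A}\colon \reals\Omega \to V$ restricts along the canonical inclusion $\Omega\into \reals\Omega$ to a map $A\colon \Omega \to V$. These two assignments are mutually inverse, so we already have a bijection between arbitrary maps $A\colon \Omega\to V$ and $\reals$\nbd linear maps $\widehat{A}\colon \reals\Omega\to V$.

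The only remaining point is to check that, under this bijection, realizations correspond precisely to $G$\nbd module homomorphisms. The $G$\nbd equivariance condition $(ug)\widehat{A} = (u\widehat{A})g$ is, for fixed $g\in G$, an equation between two $\reals$\nbd linear functions of $u\in \reals\Omega$: the left side uses the linearity of the right $G$-action on $\reals\Omega$ and of $\widehat{A}$, the right side uses the linearity of $\widehat{A}$ and of the $G$-action on $V$. Since $\Omega$ is an $\reals$\nbd basis of $\reals\Omega$, the equation holds for all $u$ if and only if it holds for $u=\omega$ ranging over $\Omega$. On $\Omega$ it reads $(\omega g)A = (\omega A)g$, which is exactly the realization condition.

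There is no serious obstacle; the observation is a definition chase combining the universal property of the free module with the fact that $G$\nbd equivariance of a linear map between permutation-type modules can be tested on basis elements. The point of the observation is purely conceptual, setting up the dictionary that will allow the realization cone to be studied through $\Hom_G(\reals\Omega, V)$ and, ultimately, through the decomposition of $\reals\Omega$ into isotypic components.
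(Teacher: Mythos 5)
Your proof is correct and follows exactly the route the paper takes (the paper simply remarks ``Since $\Omega$ is a basis of $\reals\Omega$'' and leaves the details implicit): extend a realization linearly by the universal property of the free module and check $G$\nbd equivariance on the basis $\Omega$. Nothing is missing; you have merely written out the definition chase the paper omits.
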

From now on, we identify a realization $A\colon \Omega\to V$
with the corresponding linear map $\reals \Omega\to V$,
and use the same letter $A$ for both.
We also identify 
a linear map $A\colon \reals \Omega\to V$
with its matrix $A$ with respect to the canonical 
basis $\Omega$
and some fixed orthonormal basis of $V$.
The inner product matrix of the realization $A$
is then $Q= AA^t$, and does not depend on the choice of basis
of $V$.

We also write $A^t\colon V\to \reals \Omega$
for the \emph{adjoint map} of 
$A\colon \reals \Omega\to V$ with respect to the
inner products on $\reals \Omega$ and $V$;
if $A$ is a $G$\nbd module homomorphism, then so is $A^t$.
From this viewpoint, $Q=AA^t$ is a $G$\nbd module endomorphism
of $\reals \Omega$.
\begin{thm}\label{t:ipms_endos}
  Let $\Omega$ be a transitive $G$\nbd set.
  Then
  \begin{align*}
     \RC(\Omega)
        &= \{AA^t \mid A\in \mat_{\Omega}(\reals)
             \text{ is $G$-invariant} \},
  \end{align*}
  and this equals the set of $G$-invariant, symmetric positive
  semi-definite matrices.
\end{thm}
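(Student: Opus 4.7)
The plan is to combine the Observation with the existence of adjoints in order to identify $\RC(\Omega)$ with a set of $G$-endomorphisms of $\reals\Omega$. Every realization $A\colon \Omega\to V$ extends to a $G$-module homomorphism $A\colon \reals\Omega\to V$, and the adjoint $A^t\colon V\to \reals\Omega$ is again a $G$-homomorphism, so $AA^t\colon \reals\Omega\to \reals\Omega$ is a $G$-module endomorphism. Written as a matrix in the basis $\Omega$, it is clearly symmetric, positive semi-definite, and commutes with the permutation action of $G$. This already establishes the inclusion $\RC(\Omega)\subseteq \{\text{$G$-invariant symmetric positive semi-definite matrices}\}$.

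The converse direction rests on the claim that every $G$-invariant symmetric positive semi-definite $Q\in \mat_{\Omega}(\reals)$ factors as $Q=BB^t$ with $B$ itself $G$-invariant. I would take $B:=Q^{1/2}$, the unique symmetric positive semi-definite square root of $Q$; then $BB^t=B^2=Q$ since $B$ is symmetric. The substantive point is $G$-invariance of $B$: letting $P_g$ denote the permutation matrix for $g$ acting on $\Omega$, the conjugate $P_g B P_g^{-1}$ is again a symmetric positive semi-definite square root of $P_g Q P_g^{-1}=Q$, so equals $B$ by uniqueness. (Equivalently, $Q^{1/2}$ is a polynomial in $Q$ by the spectral theorem, and therefore commutes with everything $Q$ does.) This is really the main obstacle: it is standard linear algebra, but it is precisely what is needed to replace an arbitrary realization by a congruent one landing in $\reals\Omega$ itself.

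With such a $B$ in hand, the linear map $\reals\Omega\to\reals\Omega$ given by $B$ is a $G$-homomorphism, and restricting to $\Omega\subseteq \reals\Omega$ yields a realization whose inner product matrix is $BB^t=Q$. This simultaneously shows that every $G$-invariant symmetric positive semi-definite matrix lies in $\RC(\Omega)$, and that every matrix $AA^t$ with $A\in \mat_{\Omega}(\reals)$ $G$-invariant lies in $\RC(\Omega)$. Combined with the first paragraph, all three sets agree, proving the theorem; as a byproduct one sees that every realization is congruent to one of the form $\omega\mapsto \omega Q^{1/2}$ inside $\reals\Omega$.
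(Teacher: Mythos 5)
Your proposal is correct and follows essentially the same route as the paper: the paper's Lemma~\ref{l:commuting_semidef} proves the key implication by writing $Q=\sum_{\lambda}\lambda p_{\lambda}$ via the spectral theorem and taking $A=\sum_{\lambda}\sqrt{\lambda}\,p_{\lambda}$, which is exactly your $Q^{1/2}$, with $G$\nbd invariance obtained from the $G$\nbd invariance of the eigenspaces rather than from uniqueness of the square root. The cycle of three inclusions you give matches the paper's argument.
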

This is the special case $U=\reals \Omega$ of the
following general observation:
\begin{lemma}\label{l:commuting_semidef}
  Let $U$ be an euclidean $G$\nbd space 
  and $Q\in \enmo_{\reals}(U)$.
  The following are equivalent:
  \begin{enumthm}
  \item \label{it:pseudosquare}
        There is an euclidean $G$\nbd space $V$
        and a $G$\nbd homomorphism $A\colon U\to V$
        such that $Q = AA^t$.        
  \item \label{it:sps_inv}
        $Q$ is symmetric positive semi-definite and 
        commutes with $G$.
  \item \label{it:square}
        There is  
        $A\in \enmo_{\reals G}(U)$ such that
        $Q= AA^t$.
  \end{enumthm}  
\end{lemma}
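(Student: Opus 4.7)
The plan is to prove the chain of implications
\ref{it:pseudosquare} $\Rightarrow$ \ref{it:sps_inv}
$\Rightarrow$ \ref{it:square} $\Rightarrow$ \ref{it:pseudosquare}.
Two of these are essentially formal; the only substantial content
is \ref{it:sps_inv} $\Rightarrow$ \ref{it:square}, which amounts
to producing a $G$\nbd equivariant square root of~$Q$.

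For \ref{it:pseudosquare} $\Rightarrow$ \ref{it:sps_inv}, I would
first note that because $G$ acts by orthogonal transformations on
both $U$ and~$V$, the adjoint $A^t$ of a $G$\nbd homomorphism
$A\colon U\to V$ is again a $G$\nbd homomorphism (this is a short
computation: $\scp{ug A^t, v}_U = \scp{ug, vA}_V = \scp{u, vAg^{-1}}_V
= \scp{uA^t,vg^{-1}}_U = \scp{uA^tg,v}_U$). Hence
$Q=AA^t$ is a composition of $G$\nbd homomorphisms and commutes
with~$G$. Symmetry follows from $(AA^t)^t=AA^t$, and positive
semi-definiteness from $\scp{AA^t u,u}=\scp{A^t u,A^t u}\geq 0$.
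The implication \ref{it:square} $\Rightarrow$ \ref{it:pseudosquare}
is trivial: take $V=U$.

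The heart of the argument is \ref{it:sps_inv} $\Rightarrow$
\ref{it:square}. Since $Q\in \enmo_{\reals}(U)$ is symmetric
positive semi-definite, it has a unique symmetric positive
semi-definite square root $A$, and one can write $A=p(Q)$ for some
real polynomial~$p$ (for instance, by interpolation on the set of
eigenvalues of~$Q$, or by the continuous functional calculus
applied to $t\mapsto \sqrt{t}$ on the spectrum of~$Q$). Because
$Q$ commutes with the action of every $g\in G$, so does any
polynomial in~$Q$, and therefore $A=p(Q)\in \enmo_{\reals G}(U)$.
Since $A$ is symmetric, $AA^t=A^2=Q$, as required.

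The only potential obstacle is the construction of the square root
as a polynomial in~$Q$; I expect this to go through cleanly via the
spectral theorem applied to the symmetric operator~$Q$, noting
that the eigenspaces are automatically $G$\nbd invariant (since
$Q$ commutes with~$G$), which gives an alternative more direct
construction of a $G$\nbd equivariant square root by taking the
positive square root of each eigenvalue on the corresponding
eigenspace. Either route yields $A\in \enmo_{\reals G}(U)$ with
$Q=AA^t$, completing the proof. The specialization to
$U=\reals\Omega$ then gives Theorem~\ref{t:ipms_endos}, once one
observes that every $G$\nbd invariant matrix $A\in \mat_\Omega(\reals)$
is the matrix of a $G$\nbd endomorphism of $\reals\Omega$ and
conversely.
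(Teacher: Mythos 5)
Your proposal is correct and follows essentially the same route as the paper: the two easy implications are handled formally, and for \ref{it:sps_inv}~$\Rightarrow$~\ref{it:square} the paper likewise uses the spectral decomposition of $Q$, defining $A=\sum_{\lambda}\sqrt{\lambda}\,p_{\lambda}$ with the orthogonal projections $p_{\lambda}$ onto the ($G$\nbd invariant) eigenspaces, so that $A$ commutes with $G$ and $Q=A^2=AA^t$. Your alternative phrasing via a polynomial $p(Q)$ is a harmless variant of the same idea.
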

\begin{proof}
  Obviously, \ref{it:square} is a special case 
  of~\ref{it:pseudosquare},
  and~\ref{it:pseudosquare} implies~\ref{it:sps_inv}.
  
  It remains to show that~\ref{it:sps_inv} implies~\ref{it:square},
  so assume $Q$ is symmetric positive semi-definite and
  commutes with $G$.
  Then $U$ is the orthogonal sum of the eigenspaces
  of $Q$, and the eigenvalues of $Q$ are non-negative real numbers.
  For each eigenvalue $\lambda$ of $Q$, let
  $p_{\lambda}\colon U \to U$
  be the orthogonal projection onto the corresponding eigenspace
  of $Q$.
  Since 
    $Q$ commutes with $G$, it follows that the eigenspaces 
    are $G$-invariant and thus
  the $p_{\lambda}$'s commute with $G$.
  
  Since $U$ is the orthogonal sum of the eigenspaces,
  we have $\id_U = \sum_{\lambda} p_{\lambda}$.
  For $u\in U$, it follows
  \[ uQ = \sum_{\lambda} up_{\lambda}Q 
        = \sum_{\lambda} \lambda (up_{\lambda})
        = u \sum_{\lambda} \lambda p_{\lambda}.
  \]
  Since 
  $p_{\lambda} p_{\mu}
    = \delta_{\lambda,\mu} p_{\lambda}$
  for eigenvalues $\lambda$, $\mu$ of $Q$,
  and since all $\lambda \geq 0$, we get
  \[ Q = \sum_{\lambda} \lambda p_{\lambda}
       = \left( \sum_{\lambda}  \sqrt{\lambda} 
                                  p_{\lambda}
         \right)^2.
  \]
  Set
  $A= \sum_{\lambda} \sqrt{\lambda} p_{\lambda}$,
  an element commuting with $G$.
  Then $A = A^t$, 
  since orthogonal projections are self-adjoint, 
  and thus $Q = A^2= AA^t $ as required.  
\end{proof}

\section{The structure of the realization cone}
\label{sec:structure}
In this section, we determine the structure of the realization cone.
The general idea is the following: 
We can write the module $\reals \Omega$ as an orthogonal sum 
of simple modules, say
\[ \reals \Omega \iso m_1 S_1 \oplus \dotsb 
                           \oplus m_k S_k,
\]
with natural numbers $m_i$, 
and where the different $S_i$'s are non-isomorphic.
It is well known that then
\[ \enmo_{\reals G}(\reals\Omega)
      \iso \mat_{m_1}(\enmo_{\reals G}(S_1))
           \times \dotsm \times
           \mat_{m_k}(\enmo_{\reals G}(S_k)),
\]
where for each $i$ the endomorphism ring
$\mathbb{D}_i := \enmo_{\reals G}(S_i)$
is a division ring by Schur's lemma, and thus
either $\reals$, $\compl$ or $\quats$.
The aim of this section is to fill in the details and to show that
the above isomorphism, when restricted 
to the realization cone 
$\RC(\Omega)$ as a subset of $\enmo_{\reals G}(\reals\Omega)$,
yields a similar decomposition into subcones of the 
form $\{AA^{*}\mid A \in \mat_{m_i}(\enmo_{\reals G}(S_i))\}$.

We begin by recalling some
general representation theory.
As usual, we write $\Irr G$ for the set of irreducible complex
characters of a group $G$.
Furthermore, $\Irr_{\reals}G$ denotes the set of characters of
simple $\reals G$\nbd modules
(equivalently, of irreducible representations 
 $G\to \GL(d,\reals)$).
For class functions $\alpha$, $\beta\colon G\to \compl$,
 \[ \ipcf{\alpha,\beta} := \frac{1}{\abs{G}}
                 \sum_{g\in G} \alpha(g)\overline{\beta(g)}
 \]
denotes the usual inner product of class functions.
It is well known that $\Irr G$ is an orthonormal basis
of the space of class functions
with respect to this inner product.
For $\sigma\in \Irr_{\reals} G$, we have the following
possibilities~\cite[III.5A]{simonreps}\cite[Ch.~4]{isaCTdov}:
\begin{lemma}\label{l:realreps}
  Let $S$ be a simple $\reals G$\nbd module with
  character $\sigma\in \Irr_{\reals} G$.
  Then one of the following three cases occurs:
  \begin{enumthm}
  \item $\ipcf{\sigma,\sigma}=1$, 
        $\sigma \in \Irr G$
        and\/ $\enmo_{\reals G}(S) \iso \reals$,
  \item $\ipcf{\sigma,\sigma} = 2$, 
        $\sigma = \chi + \overline{\chi}$
        with $\chi\neq \overline{\chi}\in \Irr G$
         and\/
        $\enmo_{\reals G}(S)\iso \compl$, 
  \item  $\ipcf{\sigma,\sigma} = 4$, 
         $\sigma = 2\chi $
          with $\chi = \overline{\chi}\in \Irr G$
          and\/
          $\enmo_{\reals G}(S)\iso \quats$.     
  \end{enumthm}
\end{lemma}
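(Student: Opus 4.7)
I would deduce this from the classical Frobenius--Schur theory by complexification. The starting point is that $\enmo_{\reals G}(S)$ is a finite-dimensional division algebra over $\reals$ (Schur's lemma, plus the fact that $S$ is finite-dimensional since $G$ is finite), so by Frobenius's theorem on real division algebras it must be isomorphic to $\reals$, $\compl$, or $\quats$. This already produces the three rows of the trichotomy; the remaining task is to match each value of $\enmo_{\reals G}(S)$ with the stated decomposition of $\sigma$ into complex irreducibles and with the value of $\ipcf{\sigma,\sigma}$.

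To do this I would pass to the complexification $S_{\compl}:=S\otimes_{\reals}\compl$, a $\compl G$-module whose character, viewed as a complex-valued class function, agrees with $\sigma$. Extension of scalars yields a canonical isomorphism of $\compl$-algebras
\[
  \enmo_{\compl G}(S_{\compl})\iso \enmo_{\reals G}(S)\otimes_{\reals}\compl.
\]
The three possibilities for $\enmo_{\reals G}(S)$ then translate into three shapes for $S_{\compl}$: in the case $\reals$ the right-hand side is $\compl$ and $S_{\compl}$ is simple; in the case $\compl$ it is $\compl\oplus\compl$ and $S_{\compl}\iso T_{1}\oplus T_{2}$ with $T_{1}\not\iso T_{2}$ simple; in the case $\quats$, using $\quats\otimes_{\reals}\compl\iso\mat_{2}(\compl)$, it is $\mat_{2}(\compl)$ and $S_{\compl}\iso T\oplus T$ with $T$ simple. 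Next I would use that $S$ is a \emph{real} module, which forces $S_{\compl}$ to be isomorphic to its complex-conjugate module. This pins down self-conjugacy of the constituents: in case~1 the unique constituent $T$ is self-conjugate, so $\sigma=\chi=\overline{\chi}\in\Irr G$; in case~2 complex conjugation must swap $T_{1}$ and $T_{2}$, giving $\sigma=\chi+\overline{\chi}$ with $\chi\neq \overline{\chi}$; in case~3 the repeated constituent is self-conjugate, giving $\sigma=2\chi$ with $\chi=\overline{\chi}$.

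Finally, the values $\ipcf{\sigma,\sigma}=1,2,4$ drop out of the orthonormality of $\Irr G$ applied to the three character formulas just obtained. The main technical obstacle is the transfer identity $\enmo_{\compl G}(S_{\compl})\iso \enmo_{\reals G}(S)\otimes_{\reals}\compl$ together with the explicit isomorphism $\quats\otimes_{\reals}\compl\iso\mat_{2}(\compl)$; once these are in hand, the rest of the argument is bookkeeping with characters, and the references cited after the lemma statement spell the details out.
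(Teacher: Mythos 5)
The paper does not actually prove this lemma; it delegates to the standard references on Frobenius--Schur theory, and your sketch is essentially the argument found there: Schur's lemma plus Frobenius's theorem give the trichotomy for $\enmo_{\reals G}(S)$, the base-change identity $\enmo_{\compl G}(S\otimes_{\reals}\compl)\iso\enmo_{\reals G}(S)\otimes_{\reals}\compl$ determines the shape of the complexification in each case, and orthonormality of $\Irr G$ yields the values of $\ipcf{\sigma,\sigma}$. All of that is correct.

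There is, however, one step you assert without justification, and it is the only point where the argument could genuinely fail: in the case $\enmo_{\reals G}(S)\iso\compl$ you claim that complex conjugation \emph{swaps} the two non-isomorphic constituents $T_1$, $T_2$ of $S_{\compl}$. The self-conjugacy $S_{\compl}\iso\overline{S_{\compl}}$ only tells you that conjugation permutes the pair of isomorphism classes $\{[T_1],[T_2]\}$; a priori it could fix each one, which would give $\sigma=\chi_1+\chi_2$ with two distinct real-valued irreducible constituents rather than the claimed $\chi+\overline{\chi}$ with $\chi\neq\overline{\chi}$. The gap is easy to close: choose $J\in\enmo_{\reals G}(S)$ with $J^2=-\id_S$ (a square root of $-1$ in $\compl$). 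Its $\compl$-linear extension to $S_{\compl}$ commutes with $G$, so its $\pm i$-eigenspaces are $G$-submodules giving a decomposition of $S_{\compl}$ into two proper nonzero summands; since the only such decomposition is $T_1\oplus T_2$, the eigenspaces are exactly $T_1$ and $T_2$. The semilinear conjugation of $S_{\compl}$ fixing $S$ commutes with $J$ (which is defined over $\reals$) and hence interchanges the $+i$- and $-i$-eigenspaces, so $T_2\iso\overline{T_1}$ and $\chi\neq\overline{\chi}$ as required. With this inserted, your proof is complete and agrees with the cited sources.
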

We call $S$ and $\sigma$ of real, complex or quaternion type, 
respectively.

Let $S$ be a simple $\reals G$-module with character $\sigma$. 
For any  $\reals G$\nbd module $V$,
  let $V_{\sigma}= V_S$ be the sum of all submodules 
  of $V$ isomorphic to $S$.
  The submodule $V_{\sigma}$ is called the 
  $\sigma$\nbd homogeneous component of $V$.
Every module $V$ is the direct sum of the $V_{\sigma}$,
  as $\sigma$ runs over $\Irr_{\reals}G $.
This sum is orthogonal with respect to any 
$G$\nbd invariant inner product defined on $V$.
The orthogonal projection $V\to V_{\sigma}$ is  given by
the action of
\[  e_{\sigma}
    = \frac{ \sigma(1) }{ \ipcf{\sigma,\sigma} \abs{G} }
      \sum_{g\in G} \sigma(g^{-1}) g \in \Z(\reals G)
\]
on $V$.
(The formula for the idempotent $e_{\sigma}$
follows from the analogous one in the complex
case \cite[Theorem~2.12]{isaCTdov}\cite[III.7]{simonreps}
together with Lemma~\ref{l:realreps}.)
We have
\[ 1 = \sum_{\sigma\in \Irr_{\reals} G} e_{\sigma},
   \quad \text{and} \quad
   e_{\sigma}e_{\tau} = \delta_{\sigma,\tau} e_{\sigma}
   \quad \text{for all}\: \sigma, \tau \in \Irr_{\reals}G.
\]
Notice that since $e_{\sigma} \in \Z(\reals G)$,
the action of $e_{\sigma}$ on modules commutes with both the 
action of $G$ and the action of $G$\nbd module homomorphisms.

For each $\sigma\in \Irr_{\reals}G$,
define $\RC_{\sigma}(\Omega)$ to be the set of all 
inner product matrices which arise from a realization
$A\colon \Omega \to V$ such that
$V = V_{\sigma}$, 
so $V$ has character $k\sigma$ for some $k\in \nats$.
Equivalently, if $S$ is an irreducible module
affording $\sigma$, then $V$ is isomorphic to a direct sum
of copies of $S$.
(The subcone $\RC_{\sigma}(\Omega)$ is denoted by $\mathcal{P}_D$ 
 in~\cite{mcmullen89,mcmullenmonson03},
 where $D$ is an irreducible representation of $G$
 affording $\sigma$.)
 
In the next result, we view both the inner product matrix
and the idempotent $e_{\sigma}$ as operators on the 
permutation module $\reals \Omega$.
\begin{thm}\label{t:subconeidemdec}
  (cf. \cite[Theorem~16]{mcmullen89}, 
   \cite[Theorem~4.1]{mcmullenmonson03})
  $\RC_{\sigma}(\Omega)$ is a closed subcone of $\RC(\Omega)$
  and $\RC(\Omega)$ is the direct sum of the 
  $\RC_{\sigma}(\Omega)$,
  where $\sigma\in \Irr_{\reals}G$.
  More precisely, for $Q\in \RC(\Omega)$, we have
  \[ Q = \sum_{\sigma\in \Irr_{\reals} G} Q_{\sigma}
     ,\quad\text{where}\quad
     Q_{\sigma} = e_{\sigma}Q = Qe_{\sigma}
     \in \RC_{\sigma}(\Omega).
  \]
  (In particular, $Q\in \RC_{\sigma}(\Omega)$ if and only if
    $e_{\sigma}Q=Q$, if and only if $Q=Qe_{\sigma}$.)  
\end{thm}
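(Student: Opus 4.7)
The plan is to exploit the dictionary between realizations and $G$\nbd homomorphisms $A\colon \reals\Omega \to V$, under which $Q=AA^t$ is a $G$-equivariant endomorphism of $\reals\Omega$ (Theorem~\ref{t:ipms_endos}). Two elementary properties of the central idempotent $e_{\sigma}\in \Z(\reals G)$ carry the whole argument. First, $e_{\sigma}$ is \emph{self-adjoint}, i.e.\ $e_{\sigma}^{t}=e_{\sigma}$, because it acts on any euclidean $G$\nbd space as the orthogonal projection onto the $\sigma$\nbd homogeneous component. Second, $e_{\sigma}$ commutes with every $G$\nbd homomorphism, since it is central. In particular every $Q\in \RC(\Omega)\subseteq \enmo_{\reals G}(\reals\Omega)$ commutes with $e_{\sigma}$, which already gives the identity $e_{\sigma}Q=Qe_{\sigma}$.

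For the existence of the decomposition, starting from $Q=AA^t$ with $A\colon \reals\Omega \to V$, I would define $A_{\sigma}:=Ae_{\sigma}$. By $G$\nbd equivariance this equals $e_{\sigma}A$, and since $e_{\sigma}$ acts on $V$ as projection onto $V_{\sigma}$, the map $A_{\sigma}$ factors as a $G$\nbd homomorphism $\reals\Omega \to V_{\sigma}$. Using $e_{\sigma}^{t}=e_{\sigma}$ and $e_{\sigma}^{2}=e_{\sigma}$, a line of bookkeeping gives
\[
 A_{\sigma}A_{\sigma}^{t}
 \;=\; Ae_{\sigma}\cdot e_{\sigma}A^t
 \;=\; Ae_{\sigma}A^t
 \;=\; e_{\sigma}AA^t
 \;=\; e_{\sigma}Q,
\]
so $Q_{\sigma}:=e_{\sigma}Q$ is the inner product matrix of the realization $A_{\sigma}$ whose target is the $\sigma$\nbd homogeneous space $V_{\sigma}$; hence $Q_{\sigma}\in \RC_{\sigma}(\Omega)$. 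Summing over $\sigma$ and using $1=\sum_{\sigma}e_{\sigma}$ then yields $\sum_{\sigma}Q_{\sigma}=Q$, establishing the decomposition formula.

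For uniqueness and the stated characterization, I would observe that if $R\in \RC_{\sigma}(\Omega)$ arises from some $B\colon \reals\Omega \to V$ with $V=V_{\sigma}$, then $e_{\sigma}$ acts as the identity on $V$ and $e_{\tau}$ as $0$ on $V$ for $\tau\neq \sigma$; composing with $B$ and $B^t$ yields $e_{\sigma}R=R=Re_{\sigma}$ and $e_{\tau}R=0$ for $\tau\neq \sigma$. Consequently, in any representation $Q=\sum_{\sigma}R_{\sigma}$ with $R_{\sigma}\in \RC_{\sigma}(\Omega)$, applying $e_{\tau}$ recovers $R_{\tau}=e_{\tau}Q$, which simultaneously proves uniqueness, the direct-sum assertion, and the equivalences $Q\in \RC_{\sigma}(\Omega)\iff e_{\sigma}Q=Q\iff Qe_{\sigma}=Q$. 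Finally, $\RC_{\sigma}(\Omega)$ is closed because it is the intersection of the closed cone $\RC(\Omega)$ with the closed linear subspace $\{Q:e_{\sigma}Q=Q\}$.

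The only real subtlety is notational, namely keeping careful track of whether $e_{\sigma}$ is acting on the domain or the codomain of $A$; but centrality of $e_{\sigma}$ makes this a non-issue, and there is no deeper obstacle.
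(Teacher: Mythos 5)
Your argument is correct and follows essentially the same route as the paper: identify $Q=AA^t$ with a $G$\nbd equivariant endomorphism, use that $e_{\sigma}$ is a self-adjoint central idempotent to compute $(Ae_{\sigma})(Ae_{\sigma})^t=e_{\sigma}Q$, and sum over $\sigma$ via $1=\sum_{\sigma}e_{\sigma}$. You spell out the uniqueness, direct-sum, and closedness points in slightly more detail than the paper (which leaves them to ``the result follows''), but there is no substantive difference.
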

This means that if the inner product matrix $Q$ of a realization has
  entries $q_{\xi, \eta}$, then 
  the inner product matrix 
  $Q_{\sigma} = e_{\sigma}Q$ of the 
  $\sigma$\nbd homogeneous component of the realization has
  entries 
  \[  s_{\xi,\eta}
      := \frac{\sigma(1)}{\ipcf{\sigma,\sigma}\abs{G}}
                     \sum_{g\in G}\sigma(g^{-1})
                                  q_{\xi g, \eta}
      \quad \text{for all $\xi$, $\eta\in \Omega$}. 
  \]
\begin{proof}[Proof of Theorem~\ref{t:subconeidemdec}]
  Suppose $A\colon \reals \Omega \to V$ is a realization
  with inner product matrix 
  $Q=AA^t\in \RC(\Omega)$.
  Then $e_{\sigma}A = Ae_{\sigma}$ is a realization
  $\reals \Omega \to Ve_{\sigma}$ with inner product matrix
  $(e_{\sigma}A)(e_{\sigma}A)^t 
    = e_{\sigma} Q e_{\sigma}
    = e_{\sigma} Q$,
  since $e_{\sigma}^t = e_{\sigma}= e_{\sigma}^2$.
  Thus $e_{\sigma}Q$ is an inner product matrix
  in $\RC_{\sigma}(\Omega)$.
  Conversely, if $Q\in \RC_{\sigma}(\Omega)$,
  then $Q=AA^t$ for some realization $A$ with
  $A= Ae_{\sigma} $, and thus
  $Q=e_{\sigma}Q$.
  
  Since $Q = \sum_{\sigma} e_{\sigma}Q$
  for any inner product matrix, the result follows.
\end{proof}
(That $\RC_{\sigma}(\Omega)$ is a subcone and that 
 $\RC(\Omega)$ is the sum of these subcones is also immediate
 from the equation 
 $Q(A_1\oplus A_2)=Q(A_1)+Q(A_2)$
 and the fact that every $\reals G$\nbd module can be written
 as an orthogonal sum of simple modules.)

Next we determine the structure of 
$\RC_{\sigma}(\Omega)$, for $\sigma\in \Irr_{\reals}G$.
Let $S$ be a simple $\reals G$\nbd module affording 
$\sigma$.
We can write 
$(\reals \Omega)_{\sigma}$ as the orthogonal sum of 
$m=m_{\sigma}=m_S$ copies of
$S$, that is, $(\reals \Omega)_{\sigma} \iso m S$.
The non-negative integer $m$ is called
the \emph{multiplicity} of $S$ in $\reals \Omega$
and of $\sigma$ in the character $\pi = (1_H)^G$
of $\reals \Omega$.
In other words, we have
\[ \pi = (1_H)^G = \sum_{ \sigma\in \Irr_{\reals}G }
                m_{\sigma} \sigma, \]
and this equation determines the $m_{\sigma}$'s.
(Here $H=G_{\alpha}$, the stabilizer of a vertex $\alpha$.) 

Recall that the \emph{Wythoff space} $W_S$
associated to $S$ (and $\alpha\in \Omega$) is the fixed space
of $H$ on $S$.
McMullen and Monson~\cite{mcmullenmonson03}
defined the \emph{essential Wythoff dimension}
as the dimension of $W_S$ over the centralizer ring
$\mathbb{D} = \enmo_{\reals G}(S)$.
\begin{lemma}
  The multiplicity $m_S = m_{\sigma}$ 
  equals the essential Wythoff dimension.
\end{lemma}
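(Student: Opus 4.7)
The plan is to identify both quantities with the $\mathbb{D}$-dimension of $\Hom_{\reals G}(\reals \Omega, S)$, where $S$ is a simple $\reals G$-module affording $\sigma$ and $\mathbb{D} = \enmo_{\reals G}(S)$.

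First, I would compute this Hom-space from the decomposition. Writing $\reals \Omega$ as an orthogonal sum of simple modules, exactly $m_S$ of them are isomorphic to $S$ and the others lie in other homogeneous components, so
\[
  \Hom_{\reals G}(\reals \Omega, S)
  \iso \Hom_{\reals G}\bigl((\reals \Omega)_{\sigma}, S\bigr)
  \iso \Hom_{\reals G}(m_S \cdot S, S) \iso \mathbb{D}^{m_S},
\]
where the right-hand side is viewed as a right $\mathbb{D}$-module via post-composition. In particular, $\dim_{\mathbb{D}} \Hom_{\reals G}(\reals \Omega, S) = m_S$.

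Next I would relate this Hom-space to the Wythoff space via Frobenius reciprocity. Since $\Omega \iso H \backslash G$ as $G$-sets, the permutation module $\reals \Omega$ is the induced module of the trivial $\reals H$-module, and the standard isomorphism
\[
  \Hom_{\reals G}(\reals \Omega, S) \xrightarrow{\sim} S^H = W_S,
  \qquad \phi \longmapsto \alpha\phi
\]
sends a $G$-homomorphism to the image of the chosen vertex $\alpha$ (which is $H$-fixed because $\alpha$ is fixed by $H$). I would then check that this map is $\mathbb{D}$-linear: the action of $\mathbb{D} = \enmo_{\reals G}(S)$ by post-composition on the left corresponds exactly to the natural action of $\mathbb{D}$ on $S$ (restricted to $W_S$, which is $\mathbb{D}$-stable because $\mathbb{D}$ commutes with all of $G$, hence with $H$).

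Combining the two steps gives $m_S = \dim_{\mathbb{D}} \Hom_{\reals G}(\reals \Omega, S) = \dim_{\mathbb{D}} W_S$, which is the essential Wythoff dimension. The only point requiring a moment's care is verifying that the Frobenius reciprocity isomorphism is $\mathbb{D}$-linear (rather than merely $\reals$-linear); this is essentially formal once one notes that $\mathbb{D}$ acts on both sides through its action on $S$.
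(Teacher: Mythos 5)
Your proof is correct, but it runs at the level of modules where the paper's proof runs at the level of characters. The paper computes the single number $\ipcf{\pi,\sigma}_G$ in two ways: on one hand $\ipcf{\pi,\sigma}_G = m_{\sigma}\ipcf{\sigma,\sigma}_G = m_{\sigma}\dim_{\reals}\mathbb{D}$, and on the other hand $\ipcf{\pi,\sigma}_G = \ipcf{1_H,\sigma_H}_H = \dim_{\reals}W_S$ by character-theoretic Frobenius reciprocity; dividing by $\dim_{\reals}\mathbb{D}$ gives the claim. Your argument is the categorified version of the same idea: you realize both numbers as $\dim_{\mathbb{D}}\Hom_{\reals G}(\reals\Omega,S)$, using Schur's lemma for one identification and the module-level Frobenius reciprocity isomorphism $\phi\mapsto\alpha\phi$ onto $W_S=\Fix_S(H)$ for the other. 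The extra work you flag --- checking that evaluation at $\alpha$ is $\mathbb{D}$-linear, and that $W_S$ is $\mathbb{D}$-stable because $\mathbb{D}$ centralizes $G\supseteq H$ --- is exactly what replaces the paper's division by $\ipcf{\sigma,\sigma}=\dim_{\reals}\mathbb{D}$. The character computation is shorter; your version buys an explicit $\mathbb{D}$-linear isomorphism $\Hom_{\reals G}(\reals\Omega,S)\iso W_S$, which is in the spirit of what the paper later needs anyway (the maps $\mu_i,\pi_i$ in Theorem~\ref{t:homog_subconedim} and the $\mathbb{D}$-structure on the Wythoff space in Section~\ref{sec:spherical}). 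Both are complete proofs.
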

\begin{proof}
  Let $\pi$ be the character of $\reals \Omega$.
  Then 
  $\ipcf{\pi, \sigma}_G = m_{\sigma} \ipcf{\sigma,\sigma}_G 
                        = m_{\sigma} \dim_{\reals}(\mathbb{D}) $.
  On the other hand, $\pi = (1_H)^G$ and
  $\ipcf{\pi,\sigma}_G = \ipcf{1_H,\sigma_H}_H
   = \dim_{\reals} W_S$ by Frobenius reciprocity.
  The result follows. 
\end{proof}

Before we give our structure theorem for $\RC_{\sigma}(\Omega)$,
we digress to reprove Theorems~4.4 and~4.5 of
the McMullen-Monson paper~\cite{mcmullenmonson03}, 
since, as we argue below, McMullen's and Monson's proofs 
of these theorems are not correct.

We recall that a realization $A\colon\reals \Omega \to V$ 
and the corresponding polytope
are called \defemph{pure}, 
when the image $A(\reals \Omega)$ is simple as module over $G$.
The following contains Theorems~4.4 and~4.5 from
the paper of McMullen and Monson~\cite{mcmullenmonson03}.
\begin{thm}
  Every polytope in $\RC_{\sigma}(\Omega)$ is the blend of
  at most $m_{\sigma} $ pure polytopes,
  and has dimension at most $m_{\sigma} \sigma(1)$,
  where $m_{\sigma}\sigma(1)$ is possible.
\end{thm}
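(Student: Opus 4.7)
The plan is to fix an arbitrary realization $A\colon \reals\Omega\to V$ whose inner product matrix lies in $\RC_{\sigma}(\Omega)$, and analyze the $G$\nbd module structure of its image. Without loss of generality I replace $V$ by $A(\reals\Omega)$, which does not change the congruence class of the realization. By definition of $\RC_{\sigma}(\Omega)$ the module $V$ is $\sigma$\nbd isotypic, so $V \iso \ell S$ as a $G$\nbd module for some $\ell$, where $S$ is a fixed simple $\reals G$\nbd module affording $\sigma$. Since $e_{\sigma}$ is central and acts as the identity on $V$, we have $A = Ae_{\sigma} = e_{\sigma}A$, so $A$ factors through $(\reals\Omega)_{\sigma}\iso m_{\sigma}S$; this forces $\ell\leq m_{\sigma}$ and immediately gives the dimension bound $\dim_{\reals} V = \ell\,\sigma(1)\leq m_{\sigma}\sigma(1)$ (using Lemma~\ref{l:realreps} to see that $\dim_{\reals} S = \sigma(1)$ in all three types).

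To display $A$ itself as a blend of pure realizations I would split $V$ orthogonally into simple summands. The $G$\nbd invariant inner product on $V$ ensures that the orthogonal complement of any $G$\nbd invariant subspace is again $G$\nbd invariant, so by induction $V = S_1\perp \dotsb \perp S_{\ell}$ with each $S_i$ simple. Let $\pi_i\colon V\to S_i$ be the orthogonal projection and set $A_i := A\pi_i$. Each $A_i$ is a $G$\nbd homomorphism from $\reals\Omega$ into the simple module $S_i$, so either $A_i = 0$ or $A_i(\reals\Omega) = S_i$ and $A_i$ is pure. The map $V \to S_1 \oplus \dotsb \oplus S_{\ell}$ into the external orthogonal sum given by $v\mapsto (v\pi_1,\dotsc,v\pi_{\ell})$ is a $G$\nbd equivariant isometry sending $A$ to $A_1\oplus \dotsb \oplus A_{\ell}$, so $A$ is congruent to this blend. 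Dropping any zero summands leaves a blend of at most $\ell \leq m_{\sigma}$ pure realizations.

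Finally, the sharpness of the bound $m_{\sigma}\sigma(1)$ is witnessed by the realization $A := e_{\sigma}\colon \reals\Omega\to (\reals\Omega)_{\sigma}$, which is a $G$\nbd homomorphism (since $e_{\sigma}$ is central) and whose image is the entire $\sigma$\nbd isotypic component, of real dimension exactly $m_{\sigma}\sigma(1)$. The only really delicate step is the orthogonal splitting $V = S_1\perp\dotsb\perp S_{\ell}$ into simple submodules: the argument uses the \emph{actual} $G$\nbd invariant inner product on $V$, not merely the abstract $\reals G$\nbd module structure, since a generic decomposition into simple summands need not be orthogonal; one must therefore peel off simple submodules together with their orthogonal complements.
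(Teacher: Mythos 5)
Your proof is correct and follows essentially the same route as the paper: reduce to $V=A(\reals\Omega)$, bound the number of simple summands of $V$ by the multiplicity $m_{\sigma}$ of $S$ in $\reals\Omega$, split $V$ orthogonally into simple summands to exhibit the blend, and use $e_{\sigma}$ itself for sharpness. The only cosmetic difference is that you get $\ell\leq m_{\sigma}$ by factoring $A$ through $(\reals\Omega)_{\sigma}$ (so $V$ is a quotient of $m_{\sigma}S$), whereas the paper identifies $V$ with the orthogonal complement of $\Ker A$ inside $\reals\Omega$ (so $V$ is a submodule); by semisimplicity these are equivalent.
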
 
\begin{proof}
  Let $A\colon \Omega \to V$ be a realization, 
  which we identify as usual with a
  $G$\nbd homomorphism $\reals \Omega \to V$.
  Without loss of generality, we can assume that
  $V=(\reals \Omega)A$, that is, 
  $V$ is the linear span of
  $\{\omega A \mid \omega\in \Omega\}$.
  The orthogonal complement of 
  $\Ker A$ in $\reals \Omega$ is a $G$\nbd invariant subspace 
  isomorphic to $V$.
  In particular, if $V \iso k S$, where $S$ affords $\sigma$,
  it follows from the uniqueness of the decomposition
  of $\reals \Omega$ into irreducible summands that
  $k \leq m_{\sigma}$.
  Then $A$ is the blend of $k$ pure realizations,
  and the polytope spanned by 
  $\{\omega A \mid \omega\in \Omega\} $
  has dimension $k\sigma(1) \leq m_{\sigma}\sigma(1)$.
  Finally, $e_{\sigma}$ viewed as realization
  $\reals \Omega \to U = \reals \Omega e_{\sigma}$
  yields a polytope of dimension 
  $\dim U = m_{\sigma}\sigma(1)$.  
\end{proof}

In the description of $\RC_{\sigma}(\Omega)$,
we use the following notation:
for a matrix $B$ over the complex numbers or the quaternions,
$B^{*}$ denotes the transposed conjugate.
If $B$ has real entries, then $B^{*}=B^t$, 
the transposed matrix.
\begin{thm}\label{t:homog_subconedim}
  Let $S$ be a simple module affording 
  $\sigma\in \Irr_{\reals}G$,
  let $m=m_{\sigma}$ be its multiplicity in $\reals \Omega$ and 
  set\/ $\mathbb{D} = \enmo_{\reals G}(S)$.
  Then
  \[ \RC_{\sigma}(\Omega)
      \iso \{ BB^{*}\mid B\in \mat_{m}(\mathbb{D})
           \}.
  \]
\end{thm}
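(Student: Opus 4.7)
The plan is to reduce the problem to an endomorphism-ring statement and then unpack the structure. Combining Theorem~\ref{t:subconeidemdec} with Lemma~\ref{l:commuting_semidef} applied to the euclidean $G$\nbd space $U := (\reals \Omega)_{\sigma}$, one obtains
\[
   \RC_\sigma(\Omega) = \{ AA^t \mid A \in \enmo_{\reals G}(U)\},
\]
where $A^t$ is the adjoint with respect to the inner product on $U$ inherited from $\reals \Omega$. It therefore suffices to construct a ring isomorphism $\enmo_{\reals G}(U) \iso \mat_m(\mathbb{D})$ that intertwines $A \mapsto A^t$ with the conjugate transpose $B \mapsto B^{*}$.

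The next step is to decompose $U$ orthogonally as $U = S_1 \perp \dotsb \perp S_m$ with each $S_i$ a simple submodule isomorphic to $S$, and to fix isomorphisms $\iota_i\colon S_i \to S$ that are isometries. This is possible because two $G$\nbd invariant positive definite symmetric forms on a simple $\reals G$\nbd module differ by a positive self\nbd adjoint element of $\mathbb{D}$; such an element has $G$\nbd invariant eigenspaces, so by simplicity of $S$ it must be a positive real scalar, and each $\iota_i$ can be rescaled to become an isometry. The resulting isometry $U \iso S^{\oplus m}$ induces the standard ring isomorphism $\enmo_{\reals G}(U) \iso \mat_m(\mathbb{D})$, under which the adjoint of $A = (b_{ij})$ becomes $(\tau(b_{ji}))$, where $\tau$ is the $\reals$\nbd linear anti\nbd involution on $\mathbb{D} \subset \enmo_{\reals}(S)$ obtained by restricting the euclidean adjoint on $\enmo_{\reals}(S)$; this restriction is well defined because the adjoint of a $G$\nbd endomorphism is again a $G$\nbd endomorphism.

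Finally, I would identify $\tau$ with standard conjugation on $\mathbb{D}$. Any element of $\mathbb{D}$ fixed by $\tau$ is self\nbd adjoint, so its eigenspace decomposition of $S$ is orthogonal and $G$\nbd invariant; by simplicity of $S$ each such operator is a real scalar, so the fixed $\reals$\nbd subspace of $\tau$ is exactly $\reals$. This pins down $\tau$: for $\mathbb{D} = \reals$ nothing is to check; for $\mathbb{D} = \compl$ the only $\reals$\nbd linear involution with $1$\nbd dimensional fixed subspace is complex conjugation; and for $\mathbb{D} = \quats$, a classification via Skolem--Noether shows that every $\reals$\nbd linear anti\nbd involution has the form $q \mapsto a\bar q a^{-1}$ with $a^2 \in \reals$, of which only $a \in \reals$ (standard conjugation) yields a $1$\nbd dimensional fixed subspace; the alternative $a$ pure imaginary produces a $3$\nbd dimensional one. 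Consequently $A \mapsto A^t$ corresponds to $B \mapsto B^{*}$ in the standard sense, giving $\RC_\sigma(\Omega) \iso \{BB^{*}\mid B \in \mat_m(\mathbb{D})\}$. The main technical obstacle is this last identification, and in particular the quaternion case, where one must rule out the twisted anti\nbd involutions by hand.
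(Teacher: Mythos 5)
Your proposal is correct and follows essentially the same route as the paper: reduce via Theorem~\ref{t:subconeidemdec} and Lemma~\ref{l:commuting_semidef} to squares $AA^t$ of elements of $\enmo_{\reals G}\bigl((\reals\Omega)_\sigma\bigr)$, decompose the homogeneous component as an orthogonal sum of $m$ copies of $S$ with isometric identifications, and check that the resulting isomorphism onto $\mat_m(\mathbb{D})$ carries the euclidean adjoint to the conjugate transpose. The one point where you genuinely diverge is the step you rightly flag as the technical crux, namely identifying the anti\nbd involution $\tau = (\cdot)^t$ on $\mathbb{D}$ with standard conjugation: the paper's Lemma~\ref{l:adjointconj} does this by a direct computation, showing $dd^t=\id_S$ for $d\in\{i,j,k\}$ from $d^2=-1$ and extending by $\reals$\nbd linearity, whereas you compute the fixed subspace of $\tau$ (which is $\reals$ by the eigenspace argument) and then appeal to a classification of $\reals$\nbd linear anti\nbd involutions, using Skolem--Noether to rule out the twisted ones on $\quats$. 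Both arguments are valid; the paper's is more elementary and self\nbd contained, while yours makes transparent \emph{why} conjugation is forced (it is the unique anti\nbd involution with minimal fixed subspace) at the cost of importing the classification. No gaps.
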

\begin{example}\label{expl:120cell}
  Let $\Omega$ be the vertex set of the $120$\nbd cell
  (of size $600$) and $G$ its 
  symmetry group.
  Using the computer algebra system GAP~\cite{GAP476},
  one can compute the multiplicities of the irreducible characters
  in the permutation character.
  There are $15$ characters occurring with multiplicity $1$,
  three characters occurring with multiplicity $2$
  (of degrees $16$, $16$ and $48$),
  and two characters occurring with multiplicity $3$
  (of degrees $25$ and $36$).
  All characters are of real type.
  The realization cone of the $120$\nbd cell 
  is thus a direct product
  of $15$ copies of $\reals_{\geq 0}$,
  of three copies of the cone of symmetric positive semidefinite 
  $2\times 2$-matrices, 
  and two copies of the cone of symmetric positive
  semidefinite $3\times 3$-matrices.
  The $120$\nbd cell is the only classical regular polytope for 
  which the realization cone is not polyhedral.  
\end{example}
A corollary of the theorem is the correct version 
of~\cite[Theorem~4.6]{mcmullenmonson03}.
\begin{cor}\label{c:dimsubcone}
  We have
  \begin{align*} 
    \dim \RC_{\sigma}(\Omega)
      &= m + \frac{m(m-1)}{2}\ipcf{\sigma,\sigma}
      \\
      &=
     \begin{cases}
        \frac{m(m+1)}{2} &\text{for}\quad \mathbb{D}\iso \reals,\\
        m^2 &\text{for}\quad \mathbb{D}\iso \compl,\\
        m(2m-1)  &\text{for} \quad \mathbb{D}\iso \quats.
     \end{cases}
  \end{align*}
\end{cor}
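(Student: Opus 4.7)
The plan is to read off the dimension directly from the structure theorem~\ref{t:homog_subconedim} by computing the dimension of the real vector space of $\mathbb{D}$\nbd hermitian $m \times m$ matrices, and then observing that the cone of positive semi-definite such matrices has the same real dimension.

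First, I would argue that the set $\{BB^{*} \mid B \in \mat_{m}(\mathbb{D})\}$ is exactly the cone of hermitian positive semi-definite matrices over $\mathbb{D}$ (this is standard: $BB^{*}$ is always hermitian positive semi-definite, and conversely any such matrix has a ``square root'' of this form, which one can get by diagonalizing in the real and complex cases and by the analogous spectral theorem for quaternion-hermitian matrices in the quaternion case). Then I would observe that this cone has non-empty interior in the real vector space $\mathcal{H}_m(\mathbb{D})$ of hermitian $m \times m$ matrices over $\mathbb{D}$, since the positive definite matrices $BB^{*}$ with $B$ invertible form an open subset. So the real dimension of $\RC_\sigma(\Omega)$ equals $\dim_{\reals} \mathcal{H}_m(\mathbb{D})$.

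Next I would compute $\dim_{\reals} \mathcal{H}_m(\mathbb{D})$ by splitting the entries of a hermitian matrix $H = (h_{ij})$ into diagonal and off-diagonal parts. The hermitian condition $h_{ii} = h_{ii}^{*}$ forces each diagonal entry to lie in the fixed field of conjugation on $\mathbb{D}$, which is $\reals$ in all three cases, contributing $m$ real dimensions. The off-diagonal part is determined by the upper-triangular entries, of which there are $m(m-1)/2$, each ranging freely over $\mathbb{D}$ and thus contributing $\dim_{\reals}(\mathbb{D})$ real dimensions. Hence
\[
  \dim_{\reals} \mathcal{H}_m(\mathbb{D})
     = m + \tfrac{m(m-1)}{2}\, \dim_{\reals}(\mathbb{D}).
\]

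Finally, by Lemma~\ref{l:realreps}, $\dim_{\reals}(\mathbb{D}) = \ipcf{\sigma,\sigma}$ (equal to $1$, $2$, or $4$ according as $\mathbb{D} \iso \reals$, $\compl$, or $\quats$), which both establishes the unified formula and, upon substitution of the three values, yields the case-by-case expressions $m(m+1)/2$, $m^2$, and $m(2m-1)$. There is no real obstacle here; the only mild point of care is justifying that the cone is full-dimensional in $\mathcal{H}_m(\mathbb{D})$ in the quaternion case, for which one simply notes that identity-type matrices $I + \eps A A^{*}$ remain positive definite for small $\eps$ and sweep out a neighborhood of $I$ in $\mathcal{H}_m(\quats)$ as $A$ varies.
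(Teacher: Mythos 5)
Your proof is correct and takes essentially the same route as the paper's: both read the dimension off Theorem~\ref{t:homog_subconedim} by identifying the linear span of $\RC_{\sigma}(\Omega)$ with the real vector space of self-adjoint $m\times m$ matrices over $\mathbb{D}$ and then counting real dimensions using $\ipcf{\sigma,\sigma}=\dim_{\reals}(\mathbb{D})$. You merely make explicit the full-dimensionality of the cone and the diagonal/off-diagonal entry count, which the paper's two-line proof leaves implicit.
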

\begin{proof}
  It follows from Theorem~\ref{t:homog_subconedim}
  that the linear span of $\RC_{\sigma}(\Omega)$
  is isomorphic to the $m\times m$
  self-adjoint matrices over $\mathbb{D}$.
  Since $\ipcf{\sigma,\sigma}= \dim_{\reals}(\mathbb{D})$, 
  the result follows.
\end{proof}
In the proof of Theorem~\ref{t:homog_subconedim}, and also later,
we need the following simple observation:
\begin{lemma}\label{l:adjointconj}
  Let $S$ be an irreducible euclidean $G$\nbd space
  and let\/ $\mathbb{D}=\enmo_{\reals G}(S)$.
  Then for $d\in \mathbb{D}$ we have
  $d^{t}= \overline{d}$
  (that is, the adjoint map with respect to the scalar product
  on $S$ equals the complex/quaternion conjugate).
\end{lemma}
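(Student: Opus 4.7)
The plan is first to verify that $d\mapsto d^t$ is a well-defined $\reals$-linear anti-involution of $\mathbb{D}$, and then to use positivity to force this anti-involution to coincide with the standard conjugation $d\mapsto \overline{d}$.

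First I would check that $d^t \in \mathbb{D}$ for every $d\in\mathbb{D}$: starting from the adjoint identity $\scp{sd,t} = \scp{s,td^t}$ and using both the $G$-invariance of $\scp{\cdot,\cdot}$ and the $G$-equivariance of $d$, one obtains for all $g\in G$ that $\scp{sg, (td^t)g} = \scp{(sd)g, tg} = \scp{sd,t} = \scp{sg,(tg)d^t}$, forcing $(td^t)g = (tg)d^t$. The identities $(de)^t = e^t d^t$, $(d^t)^t = d$, and $\reals$-linearity are then routine. If $\mathbb{D} = \reals$ there is nothing more to prove, so assume $\mathbb{D}\in\{\compl,\quats\}$.

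The key tool is positivity: for every $0\neq d\in\mathbb{D}$,
\[
  \tr_{\reals}(d^t d|_S) = \sum_i \norm{e_i d}^2 > 0,
\]
where $\{e_i\}$ is any real orthonormal basis of $S$, since $d$ acts injectively on the simple module $S$. I would combine this with the classification of $\reals$-algebra anti-involutions of $\mathbb{D}$. For $\mathbb{D}=\compl$ there are only two, namely $\id$ and $\overline{\cdot}$; for $\mathbb{D}=\quats$, Skolem-Noether forces every anti-involution $\tau$ to have the form $\tau(d) = u\overline{d}u^{-1}$ with $u\in\quats^{\times}$, and the condition $\tau^2 = \id$ then forces $u\overline{u}^{-1}\in\reals$, so $u$ is either real (giving standard conjugation) or purely imaginary.

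It remains to rule out the ``twisted'' possibilities via positivity. For $\mathbb{D}=\compl$, if $t=\id$ then at $d=i$ one has $d^t d = i\cdot i = -1$ and hence $\tr_{\reals}(d^t d|_S) = -\dim_{\reals} S < 0$, contradicting positivity. For $\mathbb{D}=\quats$ with $u$ purely imaginary, pick a purely imaginary unit $v$ orthogonal to $u$; using the anticommutation $uv=-vu$ one computes $\tau(v) = u\overline{v}u^{-1} = -uvu^{-1} = v$, and hence $v\tau(v) = v^2 = -\abs{v}^2$, whose real trace on $S$ is negative---again contradicting positivity. Thus $\tau = \overline{\cdot}$ in every case, so $d^t = \overline{d}$. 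The main obstacle is this quaternion step, which requires both the classification of anti-involutions of $\quats$ and the short but careful computation with the quaternion relations. An alternative route, which avoids Skolem-Noether altogether, would be to establish via a Frobenius-Schur character computation that the space of $G$-invariant symmetric $\reals$-bilinear forms on a simple $\reals G$-module is one-dimensional; this gives $\{d\in\mathbb{D}:d^t=d\}=\reals$ directly, and the lemma then follows since $d$ and $\overline{d}$ both satisfy the real quadratic $x^2-(d+d^t)x+dd^t=0$, forcing $d^t=\overline{d}$ whenever $d\notin\reals$.
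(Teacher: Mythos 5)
Your argument is correct, but it takes a genuinely different and somewhat heavier route than the paper's. You first verify that $d\mapsto d^t$ is an $\reals$\nbd algebra anti-involution of $\mathbb{D}$, classify all such anti-involutions (via Skolem--Noether in the quaternion case), and then use positivity of $\tr(d^td\vert_S)$ to exclude the non-standard ones; all of these steps check out. The paper argues more directly: $dd^t\in\mathbb{D}$ is self-adjoint positive semi-definite, so its eigenspaces are $G$\nbd invariant and, by simplicity of $S$, it equals $\lambda\id_S$ with $\lambda\geq 0$; for $d$ with $d^2=-1$ one gets $\lambda^2\scp{v,v}=\scp{vd^2,vd^2}=\scp{v,v}$, hence $\lambda=1$ and $d^t=d^{-1}=\overline{d}$, and the general case follows since both $t$ and conjugation are $\reals$\nbd linear and agree on $1,i$ (resp.\ $1,i,j,k$). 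That version needs no classification of anti-involutions and no Skolem--Noether; yours buys a cleaner conceptual picture (the adjoint is the unique \emph{positive} anti-involution) at the cost of more machinery. Your sketched alternative via the one-dimensionality of $G$\nbd invariant symmetric bilinear forms is also valid and close in spirit to Frobenius--Schur theory, but the final step needs one more word in the quaternion case: a real quadratic with non-real roots has a whole conjugacy class of roots in $\quats$, so ``$d^t$ is a root'' alone does not force $d^t\in\{d,\overline d\}$. The fix is immediate: from $d+d^t\in\reals$ you get $d^t=s-d\in\reals[d]$, and inside the commutative field $\reals[d]\iso\compl$ the condition $dd^t=d(s-d)\in\reals$ pins down $s=d+\overline d$, whence $d^t=\overline d$.
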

\begin{proof}
  We have $d^t \in \mathbb{D}$ again and thus $dd^t\in \mathbb{D}$.
    The eigenspaces of $dd^t$ on $S$ are $G$-invariant, and thus
    $dd^t = \lambda \id_S$ with $\lambda\in \reals_{\geq 0}$.
    This means that 
    $\langle vd,vd \rangle =\lambda\langle v,v \rangle$ for all
    $v\in S$.
    For $d=i$ (or $d\in \{i,j,k\}$ when $D=\quats$),
    it follows $\lambda=1$ 
    (because $\lambda^2\langle v, v\rangle 
             = \langle vd^2, vd^2 \rangle 
             = \langle -v, -v \rangle$), 
    and thus
    $d^t=\overline{d}$ in this case.
    The general case follows from this.
\end{proof}
\begin{proof}[Proof of Theorem~\ref{t:homog_subconedim}]
  First, observe that it follows from
  Theorem~\ref{t:ipms_endos} together with
  Theorem~\ref{t:subconeidemdec} that
  \[\RC_{\sigma}(\Omega)
    = \{ AA^t \mid A\in \enmo_{\reals G}(\reals\Omega), 
                   Ae_{\sigma}= A\}.
  \]
  Fix a $G$\nbd invariant inner product
  $\langle \cdot ,\cdot\rangle_S$ on the simple
  module $S$ affording $\sigma$.
  Suppose that $\mu\colon S\to \reals \Omega$
  is an isomorphism from $S$ onto some simple
  submodule of $\reals \Omega$
  (necessarily, $S\mu\subseteq \reals \Omega e_{\sigma}$).
  After eventually scaling $\mu$, we may assume that
  $ \langle v, w \rangle_S 
     = \langle v\mu, w\mu \rangle_{\reals \Omega} $.
  Then with $\pi = \mu^t\colon \reals \Omega\to S$, we have
  $\mu\pi = \id_S$ and $\pi\mu$ is the orthogonal projection
  from $\reals \Omega$ onto $S\mu$.
  We know that $\reals \Omega e_{\sigma}$ is isomorphic to 
  a sum of $m$ copies of $S$.
  Thus we can find $G$-module homomorphisms  
  $\mu_i\colon S\to \reals \Omega$ and 
  $\pi_i\colon \reals\Omega\to S$, $i=1$, $\dotsc$, $m$, 
  such that
  \[ \pi_i = \mu_i^t, \quad
     \mu_i\pi_j = \delta_{ij}\id_S 
     , \quad \text{and}\quad
     e_{\sigma} = \sum_{i=1}^m \pi_i\mu_i.
  \]
  Using these maps, we can describe the algebra isomorphism between
  \[ \{A\in \enmo_{\reals G}(\reals \Omega)\mid Ae_{\sigma}= A
     \} 
     \quad\text{and}\quad 
     \mat_m(\mathbb{D}),
  \]
  where $\mathbb{D}=\enmo_{\reals G}(S)$:
  Send $A\in \enmo_{\reals G}(\reals \Omega)$ to the 
  matrix $(\mu_i A \pi_j)\in \mat_m(\mathbb{D})$.
  Conversely,
  map a matrix $(b_{ij})$ to $\sum_{i,j} \pi_i b_{ij}\mu_j$.

  This isomorphism sends the adjoint map 
  $A^t$ to the matrix
  $(\mu_i A^t \pi_j)= ( \pi_i^t A^t\mu_j^t)
    = ( (\mu_j A \pi_i)^t) = (\overline{\mu_j A \pi_i})$,
  where the last equality follows from Lemma~\ref{l:adjointconj}.
  Thus it sends a inner product matrix 
  $AA^t$ to a matrix $BB^{*}$ as claimed.
\end{proof}

Finally, Theorem~4.7(b) of 
McMullen and Monson~\cite{mcmullenmonson03} 
has to be modified accordingly.
\begin{cor}
  Let $r+1$ be the number of layers. Then
  \begin{align*}
    r+1 &= 
     \sum_{\sigma\in \Irr_{\reals} G}
       m_{\sigma}
     + \sum_{\sigma\in \Irr_{\reals} G}
        \frac{ m_{\sigma} (m_{\sigma}-1) }{2} \ipcf{\sigma,\sigma}.
  \end{align*}
\end{cor}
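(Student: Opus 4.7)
The plan is to compute $\dim \RC(\Omega)$ in two different ways and equate the results. By Theorem~\ref{t:subconeidemdec} the realization cone decomposes as a direct sum of the subcones $\RC_\sigma(\Omega)$ for $\sigma\in\Irr_{\reals}G$, so
\[ \dim \RC(\Omega) = \sum_{\sigma\in\Irr_{\reals}G} \dim \RC_\sigma(\Omega), \]
and Corollary~\ref{c:dimsubcone} evaluates each term as $m_\sigma + \tfrac{m_\sigma(m_\sigma-1)}{2}\ipcf{\sigma,\sigma}$. Summing gives precisely the right-hand side of the claimed identity.

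For the other value of $\dim \RC(\Omega)$, I would identify it with $r+1$ by working inside the ambient vector space $W$ of all $G$-invariant symmetric real $\Omega\times\Omega$ matrices. Such a matrix $Q = (q_{\xi,\eta})$ satisfies both $q_{\xi g,\eta g} = q_{\xi,\eta}$ and $q_{\xi,\eta} = q_{\eta,\xi}$, hence is determined by its values on a set of representatives for the orbits of $G$ on unordered pairs $\{\xi,\eta\}$ (with $\xi = \eta$ allowed). These orbits are precisely the diagonal classes, which by Remark~\ref{rem:inner_prod_vecs} are in bijection with the layers, so $\dim W = r+1$.

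It remains to show that $\RC(\Omega)$ has full dimension inside $W$. By Theorem~\ref{t:ipms_endos}, $\RC(\Omega)$ is exactly the set of positive semi-definite elements of $W$; and the simplex realization $\Omega\into\reals\Omega$ has inner product matrix equal to the identity, which is positive definite. Thus the identity is an interior point of $\RC(\Omega)$ inside $W$, so $\dim \RC(\Omega) = \dim W = r+1$. Equating the two expressions for $\dim \RC(\Omega)$ yields the corollary.

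I expect no real obstacle: the only delicate point is the spanning claim for $\RC(\Omega)\subseteq W$, and this is handled by a single positive definite element coming from the simplex realization.
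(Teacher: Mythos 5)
Your proof is correct and is exactly the argument the paper leaves implicit (the corollary is stated there without proof, as the "accordingly modified" version of McMullen--Monson's Theorem~4.7(b)): one counts $\dim\RC(\Omega)$ once as $\sum_\sigma\dim\RC_\sigma(\Omega)$ via Corollary~\ref{c:dimsubcone} and once as the number of diagonal classes. Your extra care in checking that $\RC(\Omega)$ spans the full $(r+1)$-dimensional space of $G$-invariant symmetric matrices --- using that the simplex realization gives the identity matrix as a positive definite interior point --- is exactly the right way to close the one step that needs justification.
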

We can rewrite the right hand side of the above formula 
in terms of the irreducible complex characters.
Recall that 
$ m_{\sigma} = \ipcf{(1_H)^G,\sigma}/\ipcf{\sigma,\sigma}$.
Thus if $\sigma =\chi\in \Irr G$ 
or $\sigma= \chi +\overline{\chi}$
with $\chi\neq \overline{\chi}$,
then $m_{\sigma}=m_{\chi}$ ($=\ipcf{(1_H)^G,\chi}$),
and if $\sigma = 2\chi$ with 
$\chi=\overline{\chi}\in \Irr G$,
then $m_{\sigma}=m_{\chi}/2$. 
Also recall the Frobenius-Schur indicator 
$\nu_2(\chi) =(1/\abs{G})\sum_g \chi(g^2)$,
which is $1$, $0$ and $-1$, respectively, 
in the three mentioned cases.
Using all this, one can derive the following equation:
\[ r+1 = \frac{1}{2} 
         \sum_{\chi\in \Irr G} 
             m_{\chi}(m_{\chi}+\nu_2(\chi)).
\]
Herman and Monson~\cite{hermanmonson04}
derived this equation from Frame's formula for the number of 
symmetric cosets.
Conversely, we can derive Frame's formula from the last equation.

We conclude this section with a discussion about what is actually 
wrong in McMullen's and Monson's proof~\cite{mcmullenmonson03}.
The mistake is that the \emph{essential Wythoff space}
defined before Theorem~4.4 has not all the properties
the authors assume (implicitly).
It is in general not true that a traverse of the action of the 
unit complex numbers (or the unit quaternions)
can be chosen as a subspace.
For example, if the Wythoff space $W$ has dimension $4$ 
over the reals and if the centralizer ring is the field 
$\compl$ of complex numbers,
then $W\iso \compl^2$.
Clearly, not every element of $\compl^2$ can be written as
$v\cdot z$ with $v\in \reals^2$, $z\in \compl$ and $\abs{z}=1$,
for example, $(1,i)$ is not of this form.
On the other hand, in the $\reals$\nbd linear hull of
$\reals^2 \cup \{(1,i)\}$ we have the vector
$ -(1,0) + (1,i)= (0,i) = (0,1)i$, so this is no longer a traverse
for the unit complex numbers.

Of course, we can always choose a $\mathbb{D}$\nbd basis 
of $W$ and then let $W^{*}$ be the $\reals$\nbd linear hull
of this basis.
This is what is essentially done in the proof of
Theorem~4.4 in~\cite{mcmullenmonson03}.
But then the sentence ``The general pure polytope in 
$\mathcal{P}_G$
arises from a point
$\alpha_1p_1 + \dotsb + \alpha_{w^*}p_{w^*}\in W^*$''
is no longer true. 
We should allow coefficients $\alpha_i\in \mathbb{D}$,
but then different points in the Wythoff space yield congruent
realizations.
So the proof must be modified somehow.

This flaw in the arguments also bears upon results in the later 
paper~\cite{mcmullen14}.
Namely, in Theorem~5.2 there and the remarks before, 
the definition of the matrix $A$ has to be modified,
allowing for entries in the centralizer ring.
We may view Theorem~\ref{t:homog_subconedim} above as the correct
version of~\cite[Theorem~5.2]{mcmullen14}.
The $\Lambda$\nbd orthogonal basis described in Sections~5 and 6
of~\cite{mcmullen14}
does not generate the full space of cosine vectors,
if there is $\sigma$ with $m_{\sigma}>1$ and
$\mathbb{D}_{\sigma} \not\iso \reals$,
and has to be modified accordingly.
(We will consider this below in Section~\ref{sec:orthog}.) 

\section[Counterexamples]{Counterexamples to a result of %
Herman and Monson}
\label{sec:counterex}
The main case of interest of the preceding theory is when 
$\Omega$ is the vertex set of an abstract regular polytope
$P$ and $G$ is the automorphism group of $P$.
Equivalently, $G= \erz{s_0, s_1, \dotsc, s_{n-1}}$
is a \emph{string C-group}
and $H=\erz{s_1,\dotsc,s_{n-1}}$ is the stabilizer of some element
of $\Omega$.
By definition, 
this means that the generators $s_0$, $s_1$, $\dotsc$ are
involutions, that the 
\emph{intersection property}
\[ \erz{s_i \mid i \in I} \cap \erz{s_j \mid j\in J}
   = \erz{s_k\mid k\in I\cap J}
\]
holds for all subsets $I$, $J\subseteq \{0,1,\dotsc,n-1\}$,
and that $s_is_j = s_js_i$ for
$\abs{i-j}\geq 2$.
Since the polytope can be recovered
from the group $G$ and the 
distinguished generators 
$s_0$, $s_1$, $\dotsc$, $s_{n-1}$
\cite[Section~2E]{mcmullenschulte02_arp},
we do not need to recall here what an abstract regular polytope
actually \emph{is}.
The concepts of abstract regular polytopes and string C-groups are,
in a certain sense,
equivalent, and we work solely with the latter.

We now give an example which shows that we can have
$m_{\sigma}>1$ for $\sigma$ of complex type,
even when $\Omega$ is the vertex set of an abstract regular 
polytope.
This shows that Theorem~2 in~\cite{hermanmonson04}
is wrong.
The example is a special case of a more general construction which
we will consider afterwards.
\begin{example}\label{expl:pslconcrete}
  Consider the matrices
  \[ S_0=
     \begin{pmatrix}
        0 & 1 \\
        -1 & 0
    \end{pmatrix}
    ,\quad
    S_1 = 
    \begin{pmatrix}
      0 & 2 \\
      9 & 0
    \end{pmatrix}
    ,\quad
    S_2 =
    \begin{pmatrix}
      8 & -7 \\
      -7 & -8
    \end{pmatrix}
    \in \SL(2,19).
  \]
  It is not difficult to see that their images
  $s_0$, $s_1$ and $s_2$ in $G:=\PSL(2,19)$
  generate $G$ and that $G$ is a string C-group with
  respect to these involutions
  (see Lemma~\ref{l:matrices_stringc} below).
  The element $s_1s_2$ has order $3$
  and thus $H=\erz{s_1,s_2} \iso S_3$ has order $6$.
  Now $G$ has an irreducible character $\chi$
  of degree $9$ with $\chi\neq \overline{\chi}$.
  We have $\ipcf{(1_H)^G,\chi}_G = \ipcf{1_H,\chi_H}=2>1$.
  Thus the corresponding irreducible module over the reals
  has a Wythoff space of dimension $4$ and 
  essential Wythoff dimension (=multiplicity) $2$.
  (The corresponding abstract regular polytope
   has Schläfli type $\{9,3\}$.)
\end{example}
We are now going to show that there are in fact string C-groups with
irreducible representations
of complex type and arbitrary large essential Wythoff dimension.
The following is probably well known:
\begin{lemma}\label{l:matrices_stringc}
  Let $\mathbb{F}$ be a field.
  Let
  \[ S_0=
      \begin{pmatrix}
          0 & 1 \\
          -1 & 0
      \end{pmatrix}
      ,\quad
      S_1 = 
      \begin{pmatrix}
        0 & y \\
        -y^{-1} & 0
      \end{pmatrix}
      ,\quad
      S_2 =
      \begin{pmatrix}
        a & b \\
        b & -a
      \end{pmatrix}
      \in \SL(2,\mathbb{F}), \]
  where $y\neq 0, \pm 1$, $a^2+b^2=-1$ and $a\neq 0$.
  Then
  \[ G = \erz{ S_0, S_1, S_2 }/ \{\pm 1\} \leq \PSL(2,\mathbb{F})
  \]
  is a string C-group.
\end{lemma}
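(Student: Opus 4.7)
The definition of a string C\nbd group of rank~$3$ requires three things: the generators $s_0, s_1, s_2$ are non\nbd trivial involutions in $\PSL(2,\mathbb{F})$; the non\nbd adjacent pair $s_0, s_2$ commutes; and the intersection property $\langle s_0, s_1\rangle \cap \langle s_1, s_2\rangle = \langle s_1\rangle$ holds. (The remaining instances of the intersection property follow automatically from this single one in rank~$3$.) The first two conditions are routine matrix calculations: each $S_i^{\,2}=-I$ (using $a^{2}+b^{2}=-1$ in the case of $S_2$), none of the $S_i$ is $\pm I$, and a direct multiplication gives $S_0S_2 = -S_2S_0$, so $s_0$ and $s_2$ commute in $\PSL(2,\mathbb{F})$.

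The real work lies in the intersection property. The strategy is to find a subgroup that contains $\langle s_0, s_1\rangle$ and meets $\langle s_1, s_2\rangle$ in no more than $\langle s_1\rangle$. Both $S_0$ and $S_1$ are anti\nbd diagonal, so they normalize the diagonal torus $T\le \SL(2,\mathbb{F})$; hence $\langle s_0, s_1\rangle$ lies in the image $N\le \PSL(2,\mathbb{F})$ of $N_{\SL(2,\mathbb{F})}(T)$, which equals the set\nbd stabilizer of $\{0,\infty\}\subset \mathbb{P}^1(\mathbb{F})$. It therefore suffices to show that $N\cap \langle s_1, s_2\rangle = \langle s_1\rangle$.

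To analyse this intersection, set $R := S_1 S_2$. Every element of the dihedral group $\langle s_1, s_2\rangle$ lifts to a matrix of the form $R^k$ or $R^k S_1$. The Cayley--Hamilton relation yields scalars $u_k, v_k\in\mathbb{F}$ with $R^k = u_k R + v_k I$, so the off\nbd diagonal entries of $R^k$ are $u_k$ times those of $R$, namely $-u_k ya$ and $-u_k y^{-1}a$. Because $a\neq 0$, these vanish only for $u_k=0$; the identity $\det R^k=1$ then forces $R^k=\pm I$, i.e.\ $(s_1 s_2)^k = 1$ in $\PSL(2,\mathbb{F})$. A parallel calculation excludes anti\nbd diagonal $R^k$ once one observes that the diagonal entries of $R^k$ cannot simultaneously vanish unless $u_k(y+y^{-1})b = 0$ and $u_k = 0$; and the reflection coset $R^k S_1$ is handled the same way, landing in $N$ only when $u_k=0$, in which case its image in $\PSL$ is $s_1$. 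Combining everything, $N\cap \langle s_1, s_2\rangle = \{1, s_1\}$ as desired.

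\textbf{Main obstacle.} The substantive step is the last one: showing that $a\neq 0$ alone suffices to prevent any nontrivial element of $\langle s_1, s_2\rangle$ from landing in $N$. The proof uses nothing more than the Cayley--Hamilton decomposition $R^k = u_k R + v_k I$, but the two degenerate subcases $b=0$ and $y+y^{-1}=0$ require separate verification: in each, $R$ has a simpler shape, and one handles the intersection by a direct check inside the resulting small group, again using $a\neq 0$ to ensure $s_2$ cannot coincide with or be absorbed into $\langle s_0, s_1\rangle$.
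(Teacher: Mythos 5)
Your generic argument is sound, and it is a genuinely different (and in one respect more careful) route than the paper's. The paper also reduces everything to the single equality $\erz{s_0,s_1}\cap\erz{s_1,s_2}=\erz{s_1}$, but then passes to the subgroup $C=\erz{s_0s_1}\cap\erz{s_1s_2}$ via the asserted identity $\erz{s_0,s_1}\cap\erz{s_1,s_2}=\erz{s_1}C$ and shows $C=\{1\}$ by counting fixed lines: powers of $S_0S_1$ are diagonal and fix the two coordinate lines, while a common element also fixes an eigenline of $S_1S_2$, which is a third line because $ya\neq 0$. Your Cayley--Hamilton computation uses exactly the same key input ($ya\neq 0$ makes the off\nbd diagonal entries of $S_1S_2$ nonzero) but replaces the eigenline count by an explicit entry computation inside the full monomial group $N$; the price is that you must also exclude anti\nbd diagonal elements, which is precisely where your degenerate subcases appear. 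Note that the paper's identity $\erz{s_0,s_1}\cap\erz{s_1,s_2}=\erz{s_1}C$ silently assumes that an element of the intersection is a rotation in both dihedral groups or a reflection in both; the elements it misses are exactly those your anti\nbd diagonal/mixed cases detect. So your analysis is, if anything, closer to being complete than the paper's.

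The genuine gap is your final paragraph. The subcases $b=0$ and $y+y^{-1}=0$ are not "separate verifications'' that can be carried out: the lemma is actually \emph{false} there, so no direct check will close them. Take $\mathbb{F}=\mathbb{F}_5$, $y=2$, $a=2$, $b=0$; all stated hypotheses hold ($y\neq 0,\pm1$, $a\neq 0$, $a^2+b^2=4=-1$), and
\[ S_1=\begin{pmatrix}0&2\\2&0\end{pmatrix},\qquad
   S_2=\begin{pmatrix}2&0\\0&3\end{pmatrix},\qquad
   S_1S_2=\begin{pmatrix}0&1\\4&0\end{pmatrix}=S_0 .
\]
Hence $s_2=s_1s_0\in\erz{s_0,s_1}$ despite $a\neq 0$ (refuting your claim that $a\neq0$ prevents $s_2$ from being absorbed), the group $\erz{s_0,s_1,s_2}$ is a Klein four\nbd group, and $\erz{s_0,s_1}\cap\erz{s_1,s_2}$ is the whole group rather than $\erz{s_1}$. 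Your argument (and the paper's) does close under the additional hypothesis $b(y+y^{-1})\neq 0$, i.e.\ $a^2\neq-1$ and $y^2\neq-1$; this holds automatically in every application in the paper, where $\mathbb{F}=\mathbb{F}_p$ with $p\equiv -1\bmod 4$ so that $-1$ is a non\nbd square. As written, though, the deferred degenerate cases cannot be completed, and the statement needs that extra hypothesis.
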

\begin{proof}
  Let $s_i$ be the image of $S_i$ in $\PSL(2,\mathbb{F})$.
  It is easily checked that $s_0$, $s_1$ and $s_2$
  are mutually distinct involutions and that $s_0s_2=s_2s_0$.
  
  It remains to check the intersection property.
  For this, it suffices to show that
  \[ \erz{s_0,s_1}\cap \erz{s_1,s_2} = \erz{s_1}=\{1,s_1\},
  \]
  the other equalities then 
  follow~\cite[Proposition~2E16]{mcmullenschulte02_arp}.
  We have 
  \[ \erz{s_0,s_1}\cap \erz{s_1,s_2}
     = \erz{s_1}C
     \quad \text{where} \quad
     C = \erz{s_0s_1}\cap \erz{s_1s_2},
  \]
  and we want to show that $C = \{1\}$.
  As
  \[ S_0S_1 = \begin{pmatrix}
                -y^{-1} & 0 \\ 0 & -y
              \end{pmatrix}
     , \quad y\neq y^{-1},
  \]
  the matrix $S_0S_1$ and its powers have 
  eigenvectors $(1,0)$ and $(0,1)$.
  Since
  \[ S_1S_2 = 
     \begin{pmatrix}
        yb & -ya \\ 
       -y^{-1}a & -y^{-1}b 
     \end{pmatrix}
     , \quad ya \neq 0,
  \]
  the vectors $(1,0)$ and $(0,1)$ are not eigenvectors of $S_1S_2$,
  but $S_1S_2$ has an eigenvector,
  possibly over an algebraic extension $\mathbb{E}$ of 
  $\mathbb{F}$.
  Thus the elements of $C$ fix three different lines in 
  $\mathbb{E}^2$, 
  and thus come from scalar matrices as claimed.
\end{proof}
The matrices in the last lemma have been used by 
Cherkassoff and Sjerve~\cite{cherkassoffsjerve94} 
to generate $\PSL(2,q)$ for $q\equiv -1\mod 4$, $q\geq 19$.
In fact, their argument shows the following, 
which is sufficient for our purposes:
\begin{lemma}\label{l:pslgenerate}
  In Lemma~\ref{l:matrices_stringc},
  let $\mathbb{F}$ be a field with $p$ elements,
  where $p$ is a prime and
  $p\equiv -1 \mod 4$,
  and let $s_i$ be the image of $S_i$ in $\PSL(2,p)$.
  If the order of $s_0s_1$ or $s_1s_2$ 
  is $\geq 6$, then $\erz{s_0,s_1,s_2}=\PSL(2,p)$.
\end{lemma}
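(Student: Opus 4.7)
The plan is to invoke Dickson's classification of subgroups of $\PSL(2,p)$ and rule out, one class at a time, that $G := \erz{s_0, s_1, s_2}$ is contained in any proper subgroup. For $p$ prime, the relevant proper-subgroup classes are the $p$-subgroups, the cyclic subgroups, the dihedral subgroups, the Borel subgroups (stabilizers of a projective line), and the three exceptional groups $A_4$, $S_4$, $A_5$.

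The three easy cases all hinge on a single observation: $V := \{1, s_0, s_2, s_0 s_2\}$ is a Klein four-subgroup of $G$. A direct calculation in $\SL(2,\mathbb{F})$ using $a^2 + b^2 = -1$ yields $S_0 S_2 = -S_2 S_0$ and $(S_0 S_2)^2 = -I$, so $s_0$ and $s_2$ commute in $\PSL(2,p)$ while $s_0 s_2$ is a nontrivial involution distinct from $s_0$ and $s_2$. The existence of $V$ excludes cyclic and $p$-subgroup containers (the latter because $p$ is odd and $G$ has nontrivial involutions). By assumption, $G$ contains an element of order at least $6$, which rules out $A_4$, $S_4$ and $A_5$. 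For the Borel case, $G$ being contained in a line-stabilizer would force $S_0$ to fix a line; but the characteristic polynomial of $S_0$ is $\lambda^2 + 1$, which is irreducible over $\mathbb{F}_p$ since $-1$ is a non-square modulo $p$ under the assumption $p \equiv -1 \pmod 4$.

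The main obstacle is the dihedral case. If $G$ lies inside a dihedral group, then since $G$ is generated by involutions and contains three distinct pairwise-commuting involutions ($s_0$, $s_2$, $s_0 s_2$), $G$ is itself dihedral of order divisible by $4$. In any such dihedral group, every Klein four-subgroup contains the unique central involution, so one of $s_0$, $s_2$, $s_0 s_2$ must be central in $G$ and therefore commute with $s_1$. I would then dispose of each of the three subcases by a short matrix computation: each equation $S_1 X = \pm X S_1$ for $X \in \{S_0, S_2, S_0 S_2\}$ reduces to $y^2 \in \{\pm 1\}$, which is forbidden either by the hypothesis $y \neq \pm 1$ (ruling out $y^2 = 1$) or by $-1$ being a non-square modulo $p$ (ruling out $y^2 = -1$). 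This excludes the dihedral case and leaves only $G = \PSL(2,p)$.
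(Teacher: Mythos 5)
Your proof is correct and follows essentially the same route as the paper: Dickson's classification, the Klein four-group $\erz{s_0,s_2}$ to exclude cyclic and affine containment (the paper uses this subgroup for the Borel case as well, whereas you instead use the irreducibility of $\lambda^2+1$ over $\mathbb{F}_p$), the element of order $\geq 6$ to rule out $A_4$, $S_4$, $A_5$, and the observation that $s_1$ commutes with none of $s_0$, $s_2$, $s_0s_2$ to dispose of the dihedral case via the central involution. One small correction to your sketched computation: for $X=S_2$ the relation $S_1S_2=\pm S_2S_1$ reduces not to $y^2\in\{\pm 1\}$ but to $a=0$ (plus sign) or $b(y^2-1)=0$ (minus sign), both still excluded since $a\neq 0$, $y^2\neq 1$, and $b=0$ would force $a^2=-1$, which is impossible for $p\equiv -1 \mod 4$.
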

\begin{proof}
  We use Dickson's classification of the subgroups 
  of $\PSL(2,p)$~\cite[Chapter~3, Theorem~6.25]{suzuki82GT1}.
  By this classification, each proper subgroup of $\PSL(2,p)$
  is a subgroup of a dihedral group,
  a group of affine type, which means that it is isomorphic
  to a subgroup of
  \[ \left\{ \begin{pmatrix}
            a & b \\
            0 & a^{-1}
          \end{pmatrix} 
          \bigm| 
          a\in \mathbb{F}^{*}, b\in \mathbb{F}
       \right\} / \{\pm 1\},
  \]
  or it is isomorphic to one of the groups
  $A_4$, $S_4$ or $A_5$.
  
  Since $p\equiv -1\mod 4 $ and $y\neq \pm 1$,
  we see that $s_1$ does not commute 
  with any  of $s_0$, $s_2$ and $s_0s_2$.
  It follows (as in~\cite{cherkassoffsjerve94})
  that $G=\erz{s_0,s_1,s_2}$ is not a subgroup of a dihedral group, 
  since in such a group we would
  have $\erz{s_0,s_2}\cap \Z(G)\neq \{1\}$.
  
  Since $C_2\times C_2\iso \erz{s_0,s_2}
                  \leq G$,
  the group can not be of affine type, either.
  Since $G$ contains an element of order $\geq 6$,
  the exceptional cases
  $G\iso A_4$, $S_4$ or $A_5$ are ruled out, too.
  Thus $G=\PSL(2,p)$, as claimed.
\end{proof}
\begin{lemma}\label{l:pslcomplexchar}
  If $p\equiv -1 \mod 4$,
  then there is $\chi\in \Irr(\PSL(2,p))$
  such that
  \begin{align*} 
   \chi(1)
      = \frac{p-1}{2}
     , \quad
     \chi(g) &\in \compl \setminus \reals
      \quad \text{if}\quad \ord(g)=p, \\
     \quad \text{and} \quad
     \chi(g) &\in \{-1,0,1\}
     \text{ else}.
  \end{align*}
  In particular, $\overline{\chi}\neq \chi$.
\end{lemma}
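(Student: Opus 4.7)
The plan is to treat this as a fact about the character table of $G=\PSL(2,p)$ for $p\equiv 3\pmod 4$ and to construct the desired character directly rather than quote the table wholesale. First I would enumerate the conjugacy classes of $G$: the identity; a single class of involutions (these exist because $(p+1)/2$ is even, so the non-split torus contains an element of order~$2$); two classes of elements of order $p$ (the unipotent classes, distinguished by whether the transvection parameter is a square in $\mathbb{F}_p$); $(p-3)/4$ classes from the split torus, which is cyclic of order $(p-1)/2$; and $(p-3)/4$ further classes from the non-split torus, cyclic of order $(p+1)/2$. A quick count gives $(p+5)/2$ classes in total, and I would check this against the small cases $p=7,11$.

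Next I would construct the character $\chi$ of degree $(p-1)/2$. The cleanest route is via the cuspidal (discrete series) representations of $\SL(2,p)$, which have degree $p-1$ and are parametrised by nontrivial characters of the non-split torus $T_s$ modulo the inversion $\theta\mapsto\theta^{-1}$. Crucially, because $p\equiv 3\pmod 4$ the group $T_s$ has even order $(p+1)/2$ and hence possesses a unique character $\theta_0$ of order exactly~$2$; moreover $\theta_0$ is fixed by inversion. The corresponding cuspidal representation of $\SL(2,p)$ is trivial on the center and therefore descends to $\PSL(2,p)$, where it decomposes as a sum $\chi_+\oplus\chi_-$ of two irreducible constituents of equal degree $(p-1)/2$, exchanged by complex conjugation. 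I set $\chi:=\chi_+$.

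Finally I would read off the character values. On a unipotent element the standard cuspidal character formula produces the quadratic Gauss sum over $\mathbb{F}_p$, evaluating to $(-1\pm\sqrt{-p})/2$; the sign distinguishes the two unipotent classes, and the appearance of $\sqrt{-p}\in\compl\setminus\reals$ is precisely the phenomenon forced by $p\equiv 3\pmod 4$. This simultaneously verifies the claim on unipotent values and shows $\overline\chi\neq\chi$. On the involution and on non-trivial torus elements, the cuspidal character formula expresses the value in terms of $\theta_0$, which takes only the values $\pm 1$, so the values lie in $\{-1,0,1\}$ after the obvious cancellations; on split torus elements the cuspidal character vanishes away from the identity, contributing~$0$.

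The main obstacle I anticipate is the torus-value bookkeeping. A priori, a character of $\PSL(2,p)$ might take algebraic-integer values of arbitrary complexity on the two tori; the reason we get $\{-1,0,1\}$ is exactly that $\theta_0$ has order~$2$, so no primitive higher roots of unity enter. I would double-check the computation by verifying the first orthogonality relation $\ipcf{\chi,\chi}=1$ against the claimed values, and by matching with the known character table of $\PSL(2,7)$ (where $\chi$ has degree $3$ and values $-1,0,1$ on the non-identity, non-unipotent classes of orders $2,3,4$).
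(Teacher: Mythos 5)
Your proof is correct, but it takes a genuinely different route from the paper. You construct $\chi$ as one of the two constituents of the degenerate cuspidal (discrete series) representation attached to the unique order-$2$ character $\theta_0$ of the non-split torus, and then read the values off the standard character table of $\PSL(2,p)$: the Gauss sum $(-1\pm\sqrt{-p})/2$ on the unipotent classes, $0$ on the split torus, $-\theta_0(x)\in\{\pm1\}$ on the non-split torus. The paper instead works with the Weil representation $\psi$ of $\SL(2,p)$ (of degree $p$) and uses only three quoted facts: $\abs{\psi(g)}^2=\abs{\Ker(g-1)}$, the value $\pm\sqrt{-p}$ on elements of order $p$, and rationality of $\psi$ on $p'$\nbd elements; writing $\chi=\psi_+$ with $\chi(g)=\tfrac12(\psi(g)+\psi(-g))$, the bound $\abs{\psi(g)}=1$ for $g\neq\pm1$ of order prime to $p$ forces $\chi(g)\in\{-1,0,1\}$ with no case analysis on the two tori. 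Of course the two constructions produce the same pair of degree-$(p-1)/2$ characters. Your version yields more information (the complete list of values, and which unipotent class carries which sign of $\sqrt{-p}$) at the cost of invoking the full cuspidal character formula; the paper's version is leaner for the purpose at hand. Two small points to tidy in your write-up: the cuspidal representations of $\SL(2,p)$ are parametrized by characters of the non-split torus of order $p+1$ (the order $(p+1)/2$ you quote is that of its image in $\PSL(2,p)$ -- harmless here, since the unique order-$2$ character of the order-$(p+1)$ torus is trivial on $-1$ precisely because $(p+1)/2$ is even, which is what lets the representation descend); and the class count and the orthogonality check are not needed for the lemma, though they are a sensible sanity check.
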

\begin{proof}
  We show this by using the Weil representation
  of $\SL(2,p)$, which equals the symplectic group in dimension
  $2$.
  The character $\psi$ of the Weil representation 
  has the property
  $\abs{\psi(g)}^2 = \abs{\Ker(g-1)}$ for all $g\in \SL(2,p)$,
  and decomposes into two irreducible characters
  $\psi = \psi_{+} + \psi_{-}$~\cite[Theorem~4.8]{i73}.
  (See also~\cite{howe73} and~\cite{prasad09p} 
  for an elementary approach to the Weil representation.)
  Here $\psi_{+}(-1) = \psi_{+}(1)$,
  so that the kernel of $\psi_{+}$ contains 
  $\{\pm 1\} = \Z(\SL(2,p))$
  and we can view $\chi= \psi_{+}$ as character of $\PSL(2,p)$.
  On the other hand, the constituent
  $\psi_{-}$ is defined by $\psi_{-}(-1) = -\psi_{-}(1)$.
  Thus we have $\psi(g)=\psi_{+}(g)+ \psi_{-}(g)$
  and $\psi(-g) = \psi_{+}(g) - \psi_{-}(g)$.
  It follows that 
  \[ \chi(g) = \psi_{+}(g) = \frac{1}{2} ( \psi(g)+\psi(-g) ).
  \]
  In particular, $\chi(1) = (p\pm 1)/2$.
  For our application this is actually all we need to know,
  but for completeness, let us mention that for 
  $p\equiv -1 \mod 4$ we have $\psi(-1)=-1$,
  so $\chi(1) = (p-1)/2$.
  (This follows from the known formulas for $\psi$~\cite{thomas08},
  but is easiest seen from remarking that $\psi_{-}(1)$
  must be even because $-1$ is in the kernel of the determinant of
  $\psi$.)
  
  If $g\in \SL(2,p)$ has order $p$,
  then 
  $\psi(g) = \pm \sqrt{-p}
  $~\cite[Corollary~6.2]{i73}\cite{thomas08},
  and $\psi(-g) = -1$.
  (Again, we only need to know that $\abs{\psi(-g)}=1$.)
  Therefore,
  $\chi(g) = (\pm \sqrt{-p} -1)/2$,
  and thus $\chi(g)\neq \overline{\chi(g)}$.
  
  If neither $g\in \SL(2,p)$ nor $-g$ has order $p$, then
  the order of $g$ is not divisible by $p$.
  In this case,
  $\psi(g)$ is rational~\cite[Proposition~2]{howe73}.
  Also, we have $\Ker(g-1) = \Ker(g+1) = \{0\}$, 
  except when $g=\pm 1$.
  It follows that
  $\psi(g)$, $\psi(-g) \in \{\pm 1 \}$.
  Thus 
  $\chi(g)= (1/2)(\pm 1 \pm 1)\in \{-1,0,1\}$.
\end{proof}
\begin{thm}\label{t:complbigwythoff}
  There are abstract regular polytopes 
  which have a pure realization of complex type
  with arbitrary large  essential Wythoff dimension.  
\end{thm}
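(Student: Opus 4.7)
The plan is to realize $G = \PSL(2,p)$ as a string C-group for infinitely many primes $p \equiv -1 \pmod 4$, in such a way that the vertex stabilizer $H = \langle s_1, s_2 \rangle$ has bounded order (say $|H| = 6$) independent of $p$, and then apply Frobenius reciprocity with the character $\chi$ of Lemma~\ref{l:pslcomplexchar} to force the multiplicity $m_\chi = \langle(1_H)^G, \chi\rangle$ to grow linearly in $p$.

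More precisely, for each sufficiently large $p \equiv -1 \pmod 4$, I would choose matrices $S_0, S_1, S_2$ of the form prescribed in Lemma~\ref{l:matrices_stringc} so that (a)~$s_1 s_2$ has order $3$ in $\PSL(2,p)$, making $H \cong S_3$, and (b)~$s_0 s_1$ has order at least $6$. For (a), direct computation gives $\tr(S_1 S_2) = b(y - y^{-1})$, so the requirement is $b(y - y^{-1}) = \pm 1$ together with $a^2 + b^2 = -1$ and $a \neq 0$; eliminating $y$ leads to the condition that $1 + 4b^2$ be a nonzero square in $\mathbb{F}_p$ while simultaneously $-1-b^2$ is a nonzero square. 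A standard Weil/character-sum estimate shows that the number of $b \in \mathbb{F}_p$ satisfying both quadratic-residue conditions is $p/4 + O(\sqrt p)$, so suitable parameters exist for all large $p$. For (b), the product $S_0 S_1 = \operatorname{diag}(-y^{-1}, -y)$ has image in $\PSL(2,p)$ of order equal to that of $-y$ in $\mathbb{F}_p^\times/\{\pm 1\}$, and only finitely many values of $y$ fail to give order $\geq 6$; these can be avoided. Lemmas~\ref{l:matrices_stringc} and~\ref{l:pslgenerate} then guarantee that $G = \PSL(2,p)$ is a string C-group with respect to $s_0, s_1, s_2$ and with vertex stabilizer $H$ of order $6$.

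With this setup, let $\chi \in \Irr G$ be the character of Lemma~\ref{l:pslcomplexchar}, so $\chi(1) = (p-1)/2$, $\chi \neq \overline{\chi}$, and $|\chi(g)| \leq 1$ whenever $g$ has order not divisible by $p$. Since $|H| = 6 < p$, no non-identity element of $H$ has order $p$, so by Frobenius reciprocity
\[
   m_\chi = \langle (1_H)^G, \chi \rangle_G = \langle 1_H, \chi_H \rangle_H
          = \frac{1}{6}\Bigl( \chi(1) + \sum_{h \neq 1} \chi(h) \Bigr)
          \geq \frac{(p-1)/2 - 5}{6},
\]
which tends to infinity. Setting $\sigma = \chi + \overline{\chi} \in \Irr_{\reals} G$ (which is of complex type), the essential Wythoff dimension $m_\sigma$ equals $m_\chi$ (because $(1_H)^G$ is a real character, so its inner products with $\chi$ and $\overline{\chi}$ agree), and a pure realization in $\RC_\sigma(\Omega)$ of the corresponding abstract regular polytope has essential Wythoff dimension bounded below by $(p-11)/12$.

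The main obstacle is the existence claim in step~(a): showing that for infinitely many $p \equiv -1 \pmod 4$ one can simultaneously satisfy the conic condition $a^2 + b^2 = -1$ and the trace condition $b(y - y^{-1}) = \pm 1$ with $y \neq 0, \pm 1$ and $a \neq 0$. This is purely a question of counting $\mathbb{F}_p$-points on a smooth affine curve, solvable by either an explicit rational parametrization of the conic combined with a Legendre-symbol sum in one variable, or by directly invoking the Weil bound. Everything else is a direct application of the lemmas already proved.
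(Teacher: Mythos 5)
Your overall strategy is the same as the paper's (the character $\chi$ of Lemma~\ref{l:pslcomplexchar}, the bound $\ipcf{1_H,\chi_H}\geq(\chi(1)-(\abs{H}-1))/\abs{H}$ via Frobenius reciprocity, the identification $m_\sigma=m_\chi$ for $\sigma=\chi+\overline{\chi}$, and Lemmas~\ref{l:matrices_stringc} and~\ref{l:pslgenerate} for the string C\nbd group property), but you diverge at the one step that actually requires a choice: which subgroup plays the role of the vertex stabilizer. The paper interchanges the roles of $s_0$ and $s_2$ (equivalently, passes to the dual polytope) and takes $H=\erz{s_0,s_1}$. This is decisive, because $S_0S_1=\operatorname{diag}(-y^{-1},-y)$, so the order of $s_0s_1$ is read off directly from the multiplicative order of $y$, a completely free parameter: choosing $y$ of order $7$ (possible whenever $p\equiv 1\bmod 7$, and Dirichlet supplies infinitely many primes with $p\equiv -1\bmod 4$ and $p\equiv 1\bmod 7$) gives $\abs{H}=14$ with no further work, and simultaneously satisfies the hypothesis of Lemma~\ref{l:pslgenerate}. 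You instead insist on $H=\erz{s_1,s_2}$ of order $6$, which forces you to control the trace of the non-diagonal product $S_1S_2$ and leads to the simultaneous conditions ($1+4b^2$ a nonzero square, $-1-b^2$ a nonzero square, $\operatorname{ord}(-y)$ not too small) that you yourself flag as the main obstacle.

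That obstacle is real but surmountable, so I would call your route correct in principle yet incomplete as written: the character-sum count of admissible $b$ is standard but must actually be carried out, and you additionally need the small supplementary argument that the finitely many $y$ with $\operatorname{ord}(-y)$ small exclude only boundedly many $b$ (each bad $y$ determines $b$ up to sign via $b=\pm 1/(y-y^{-1})$), so that admissible parameters survive for all large $p$. The net comparison: your approach buys a smaller vertex stabilizer ($\abs{H}=6$ instead of $14$) and hence a slightly better lower bound $m_\sigma\geq (p-11)/12$, at the cost of a point-counting argument; the paper's approach costs nothing beyond Dirichlet's theorem because the order of $s_0s_1$ is a diagonal invariant. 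If you want to keep your version, finish the residue count and the exclusion argument explicitly; otherwise, the single observation that the string condition is symmetric under reversing the generators lets you swap $s_0$ and $s_2$ and recover the paper's shortcut.
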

\begin{proof}
  Let 
  $p$ be a prime such that $p\equiv -1\mod 4$ and
  $p\equiv 1 \mod 7$.
  Choose
  $y\in \mathbb{F}_p$ in Lemma~\ref{l:matrices_stringc}
  of multiplicative order $7$,
  and let $S_i$ and $s_i$ be as in Lemmas~\ref{l:matrices_stringc}
  and~\ref{l:pslgenerate}.
  Then $s_0s_1$ has order $7$.
  By these lemmas, $G=\PSL(2,p)$ is a string C-group
  with respect to $s_0$, $s_1$ and $s_2$.
  Thus there is an abstract regular polytope with vertex set
  the right cosets of $H=\erz{s_0,s_1}$.
  (Compared with Example~\ref{expl:pslconcrete}, the rôles 
   of $s_0$ and $s_2$ are now interchanged.)
  Notice that $H$ is a dihedral group of order $2\cdot 7=14$.
  
  Let $\chi$ be the character of Lemma~\ref{l:pslcomplexchar}
  and $S$ an irreducible module over $\reals G$ with character
  $\chi+\overline{\chi}$. 
  Then the essential Wythoff dimension of $S$ is
  \[ \ipcf{(1_H)^G, \chi}_G = \ipcf{1_H,\chi}_H
       \geq \frac{1}{14}\left( \frac{p-1}{2} - 13
                        \right)
       = \frac{p-1}{28}-\frac{13}{14}.
  \]
  Since there are infinitely primes
  $p$ with $p\equiv -1\mod 4$ and $p\equiv 1\mod 7$
  by Dirichlet's theorem,
  we can make this lower bound as large as we wish.
\end{proof}
The condition $p\equiv 1\mod 7$ in the proof was chosen only
for convenience. 
It is clear from the preceding lemmas that for 
``big'' primes $p$, we usually get a lot of 
possibilities of representing $\PSL(2,p)$
as a string C-group of type $\{k,l\}$,
with one or both of $k$, $l$ ``small''.

Checking small primes
suggests that every $\PSL(2,p)$, $19\leq p\equiv -1 \mod 4$,
is even a string C-group with respect to some 
generating set $\{s_0,s_1,s_2\}$ such that 
$s_0s_1$ has order $3$.

In~\cite[Remark~5.4]{mcmullen14},
McMullen says that he has ``not as
yet encountered any instances with 
[essential Wythoff dimension] $w^{*} > 2$''.
Of course, the examples of Theorem~\ref{t:complbigwythoff}
are such instances. 
However, another example is the $120$-cell.
As we mentioned in Example~\ref{expl:120cell}, 
there are two pure realizations of the $120$\nbd cell 
having Wythoff space of essential dimension $3$.

Even another example are the duals of the polytopes 
$\mathcal{L}_p^3$ with group 
$\PGL(2,p)$~\cite{mcmullen91,mcmullen11}.
The stabilizer of a facet of $\mathcal{L}_p^3$ has order $6$,
this is the stabilizer of a vertex of the dual polytope.
Since $\PGL(2,p)$ is $2$\nbd transitive on the $p+1$ lines
of $\mathbb{F}_p$ (in fact, sharply $3$-transitive),
the corresponding permutation character contains an irreducible 
character of degree $p$, which has values in $\{-1,0,1\}$ on
the non-identity elements of $\PGL(2,p)$.
The corresponding Wythoff space has dimension at least $(p-5)/6$.

\section{Orthogonality}
\label{sec:orthog}
On the set of matrices $\mat_{\Omega}(\reals)$,
the standard inner product is defined by
\[ \langle A, B \rangle
   =  \tr(AB^t).
\]
Now assume that $A = (a_{\xi \eta})$ and $B = (b_{\xi\eta})$ are 
$G$\nbd invariant matrices, and fix some $\alpha\in \Omega$.
Then for $\xi = \alpha g$ (say) we have
\[ \sum_{\eta \in \Omega} a_{\xi \eta}b_{\eta\xi}
           = \sum_{\eta \in \Omega} 
                a_{\alpha g, \eta}b_{\eta,\alpha g}
           = \sum_{\eta \in \Omega} 
                a_{\alpha g,\eta g}b_{\eta g,\alpha g}
           = \sum_{\eta \in \Omega} 
                a_{\alpha\eta}b_{\eta\alpha}.
\]
Thus
\[ \tr(AB^t)
   = \sum_{\xi,\eta\in\Omega} a_{\xi\eta}b_{\eta\xi}
   = \abs{\Omega} 
     \sum_{\eta\in \Omega} 
        a_{\alpha\eta}b_{\eta\alpha}.
\]
If additionally $A$ and $B$ are symmetric
(for example, $A$ and $B$ are inner product matrices
of realizations of $\Omega$),
then 
$\eta\mapsto a_{\alpha\eta}b_{\eta\alpha}$ 
is constant on the layers
of $\Omega$.
Let $\xi_0=\alpha$, $\xi_1$, $\dotsc$, $\xi_r$
be representatives of the layers
and define vectors $a$, $b\in \reals^{r+1}$
by $a_i = a_{\alpha, \xi_i}$, $b_i = b_{\alpha,\xi_i}$.
Let $\ell_i$ be the size of the layer containing $\xi_i$.
Then
\[ \tr(AB^t) 
   = \abs{\Omega} \sum_{\eta\in \Omega} 
                    a_{\alpha\eta}b_{\eta\alpha} 
   = \abs{\Omega} \sum_{i=0}^r \ell_i a_i b_i
   = \abs{\Omega}^2 \scp{a,b}_{\Lambda},
\]
where $\scp{a,b}_{\Lambda}$ is
the $\Lambda$\nbd inner product defined
by McMullen~\cite{mcmullen14} for inner product vectors.
So the correspondence between inner product vectors and 
inner product matrices identifies the $\Lambda$\nbd inner product
of McMullen with the standard inner product on matrices,
up to a scalar.
To maintain consistency with McMullen's notation, we write
\[ \lipr{A,B} = \frac{1}{\abs{\Omega}^2}\tr(AB^t)
\]
for $G$\nbd invariant, symmetric matrices $A$ and $B$.

\begin{thm}\label{t:ortho}
  If the simplex realization is written 
  as the blend of realizations
  $A_1 \oplus \dotsb \oplus A_s$,
  $A_i\colon \reals \Omega\to V_i$, with inner product matrices
  $Q_i$, 
  then 
  \[ \scp{Q_i,Q_j}_{\Lambda} 
      = \delta_{ij} 
         \frac{ \dim(V_i) }{ \abs{\Omega}^2 }. \]
\end{thm}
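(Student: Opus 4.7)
The plan is first to interpret the hypothesis. The simplex realization corresponds to the identity endomorphism of $\reals\Omega$, whose inner product matrix is the identity $I$. To say that it is the blend $A_1 \oplus \dotsb \oplus A_s$ is therefore to exhibit $\reals\Omega$ as the orthogonal direct sum of euclidean $G$\nbd modules $V_1,\dotsc,V_s$, with each $A_i$ identified with the orthogonal projection onto $V_i$. Equivalently, writing $A = [A_1 \mid \dotsb \mid A_s]$ as a single linear map $\reals\Omega \to V_1 \oplus \dotsb \oplus V_s$, the hypothesis says $A$ is an isometric isomorphism, i.e., an orthogonal matrix.

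The decisive step is to read off the block structure of $A^tA = I$: its $(i,j)$\nbd block is $A_i^t A_j \colon V_j \to V_i$, so that
\[
A_i^t A_j = \delta_{ij}\id_{V_i}.
\]
This ``adjoint-orthogonality'' is where the hypothesis really bites; the weaker relation $AA^t = I$ (i.e., $Q_1 + \dotsb + Q_s = I$) alone would not suffice, as one can see from artificial blends like $\frac{1}{\sqrt{2}}I \oplus \frac{1}{\sqrt{2}}I$ into a larger ambient space, for which cross terms do not vanish.

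From here the rest is routine. Substituting $Q_i = A_iA_i^t$ gives $Q_iQ_j = A_i(A_i^tA_j)A_j^t = \delta_{ij}Q_i$, so that each $Q_i$ is the orthogonal projection of $\reals\Omega$ onto $V_i$. Since $Q_j$ is symmetric, and using cyclicity of trace together with $\tr(A_iA_i^t) = \tr(A_i^tA_i) = \dim V_i$, we obtain
\[
\lipr{Q_i,Q_j} = \frac{\tr(Q_iQ_j^t)}{\abs{\Omega}^2} = \frac{\delta_{ij}\tr(Q_i)}{\abs{\Omega}^2} = \delta_{ij}\frac{\dim(V_i)}{\abs{\Omega}^2}.
\]
I expect the only genuine obstacle to be the first step—recognizing that the simplex-blend hypothesis forces $A$ to be orthogonal and extracting the relation $A_i^tA_j = \delta_{ij}\id$—after which everything else is a short trace manipulation.
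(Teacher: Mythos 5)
Your proof is correct and follows essentially the same route as the paper's: both arguments identify the $Q_i$ as pairwise orthogonal projections onto the summands of an orthogonal $G$\nbd decomposition of $\reals\Omega$ (the paper simply asserts that the $A_i$ \emph{are} these projections, so $Q_i=A_i^2=A_i$) and then compute $\tr(Q_iQ_j)$. Your explicit extraction of $A_i^tA_j=\delta_{ij}\id_{V_i}$ from the hypothesis, together with the observation that mere congruence of the blend to the simplex realization (i.e.\ $\sum_i Q_i=I$ alone) would not suffice, makes precise an interpretation that the paper leaves implicit, but the substance of the argument is the same.
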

\begin{proof}
  The simplex realization is simply the identity
  $\id\colon \reals \Omega \to \reals \Omega$.
  The $A_i$ are then simply the orthogonal projections
  onto $V_i$, as are the $Q_i=A_iA_i^t=A_i^2=A_i$.
  It follows $Q_iQ_j=0$ for $i\neq j$,
  and $\tr(Q_i^2)= \tr(Q_i)=\dim V_i$.
\end{proof}
Notice that the $A_i$'s are not normalized realizations.
To normalize $A_i$, we have to scale $A_i$ by a factor
$\sqrt{\abs{\Omega}/\dim(V_i)}$.
So for the cosine matrices $C_i = \abs{\Omega}/\dim(V_i)$ 
of the $A_i$, we get
$\scp{C_i,C_i}_{\Lambda}= 1/\dim(V_i)$.
This is in accordance with~\cite[Theorem~4.5]{mcmullen14}.

The $\Lambda$-orthogonal basis of the realization cone
which McMullen constructs in~\cite{mcmullen14} is 
in general too small, due to the mistake in~\cite{mcmullenmonson03}.
We now indicate how to repair this.
We need to find orthogonal bases of the subcones
$\RC_{\sigma}(\Omega)$, for each $\sigma\in \Irr_{\reals}G$.
For this, we have to see what the isomorphism
of Theorem~\ref{t:homog_subconedim}
does to the scalar product.
Suppose that $A$ and $B\in \enmo_{\reals G}(\reals \Omega)$
are such that $e_{\sigma}A=A$ and $e_{\sigma}B=B$.
Choose $\mu_i$ and $\pi_i$ as in the proof
of Theorem~\ref{t:homog_subconedim},
and let $U=\reals \Omega e_{\sigma}$.
Then
\begin{align*}
  \tr_{\reals \Omega}(AB^t)
     &= \tr_U(AB^t)
     = \tr_U(\sum_i \pi_i\mu_i AB^t \sum_j \pi_j\mu_j)
     \\
     &= \sum_i \tr_S(\mu_i AB^t \pi_i)
     = \tr_S ( \sum_{i,j} a_{ij}\overline{b_{ij}} ),  
\end{align*}
where $a_{ij}=\mu_iA\pi_j \in \mathbb{D}$
and $\overline{b_{ij}} = (b_{ij})^t
       = (\mu_i B\pi_j)^t = \mu_j B^t \pi_i$.
Let $d = \sum_{i,j} a_{ij}\overline{(b_{ij})}
       = \tr( (a_{ij})(b_{ij})^*)$.
Then $\tr_S(d)= (\dim_{\reals}S) (d+\overline{d})/2$.

Thus the isomorphism of Theorem~\ref{t:homog_subconedim}
respects the canonical inner products on the involved spaces,
up to a scaling.
It is now clear how to choose an orthogonal basis
in the linear span of $\RC_{\sigma}(\Omega)$.
For example, if $m=2$ and $\mathbb{D}=\compl$,
we choose matrices corresponding to
\[ \begin{pmatrix}
      1 & 0 \\
      0 & 0
   \end{pmatrix}
   ,\quad 
   \begin{pmatrix}
      0 & 0 \\
      0 & 1
   \end{pmatrix}
   ,\quad 
   \begin{pmatrix}
      0 & 1 \\
      1 & 0
   \end{pmatrix}
   ,\quad 
   \begin{pmatrix}
      0 & i \\
      -i & 0
   \end{pmatrix}
\]
under the isomorphism of Theorem~\ref{t:homog_subconedim}.
Notice that the last two matrices do not correspond to 
realizations (they are not positive semi-definite).
Also, if $m>1$, the isomorphism of Theorem~\ref{t:homog_subconedim}
is by no means canonical, and thus we do not get a uniquely
defined basis.

\section{Cosine vectors and spherical functions}
\label{sec:spherical}
In this section, we explain the relation between 
cosine vectors and spherical functions, and use it to show
that the entries of a cosine vector of a realization with
essential Wythoff dimension $1$ are algebraic numbers.
We continue to assume that $G$ is a finite group,
$\Omega$ is a transitive $G$\nbd set 
and $H=G_{\alpha}$ is the stabilizer of some fixed initial vertex 
$\alpha$.
In the following, we set
\[ e_H := e_{1_H} = \frac{1}{\abs{H}} \sum_{h\in H} h
   .
\]
\begin{thm}\label{t:sumcosinechar}
  Let $S$ be a simple euclidean $G$\nbd space
  with character $\sigma$ 
  and with centralizer ring $\mathbb{D} = \enmo_{\reals G}(S)$.
  Let $W= \Fix_S(H)$ be the Wythoff space in $S$ and
  let $w_1$, $\dotsc$, $w_m$ be a basis of $W$ over $\mathbb{D}$
  such that the following hold: 
  We have $\scp{w_i,w_i}=1$,
  and whenever $i\neq j$ and $d_1$, $d_2\in \mathbb{D}$,
  then $\scp{w_id_1,w_jd_2}=0$.
  Then for all $g\in G$ we have
  \[ \sigma(e_Hg) = \ipcf{\sigma,\sigma}
                    \sum_{i=1}^m \scp{w_ig,w_i}.
  \]
\end{thm}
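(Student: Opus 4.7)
My plan is to show that $e_H$ acts on $S$ as the orthogonal projection onto the Wythoff space $W=\Fix_S(H)$, and then compute the trace $\sigma(e_Hg)=\tr_S(e_Hg)$ using a real orthonormal basis of $W$ built from $w_1,\dotsc,w_m$.

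First, observe that $e_H=\frac{1}{\abs{H}}\sum_{h\in H}h$ is idempotent, and since each $h\in H$ acts orthogonally on $S$ (so $h^t=h^{-1}$), also self-adjoint; its image is precisely $W$, so $e_H$ is the orthogonal projection $S\to W$. By cyclic invariance of the trace and $e_H^2=e_H$,
\[
\sigma(e_Hg)=\tr_S(e_Hg)=\tr_S(e_Hge_H)=\tr_W\bigl(e_Hge_H|_W\bigr).
\]
Since $(e_Hge_H)(w)=e_H(wg)$ is the orthogonal projection of $wg$ onto $W$ for $w\in W$, this trace equals $\sum_k\scp{u_kg,u_k}$ for any real orthonormal basis $\{u_k\}$ of $W$.

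Next I construct such a basis. Let $\{e_0,e_1,\dotsc\}$ with $e_0=1$ be the standard orthonormal basis of $\mathbb{D}$ over $\reals$ (so $\{1\}$, $\{1,i\}$, or $\{1,i,j,k\}$ according as $\mathbb{D}=\reals$, $\compl$, or $\quats$); the elements $e_\alpha$ for $\alpha\geq 1$ satisfy $\overline{e_\alpha}=-e_\alpha$, and $e_\alpha\overline{e_\alpha}=1$ for all $\alpha$. I claim $\{w_ie_\alpha\}_{i,\alpha}$ is an orthonormal real basis of $W$. For $i\neq j$, orthogonality is immediate from the hypothesis. For the same $i$, Lemma~\ref{l:adjointconj} gives
\[
\scp{w_ie_\alpha,w_ie_\beta}=\scp{w_i,w_ie_\beta\overline{e_\alpha}},
\]
which equals $\scp{w_i,w_i}=1$ when $\alpha=\beta$. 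When $\alpha\neq\beta$, the product $e_\beta\overline{e_\alpha}$ is $\pm e_\gamma$ for some index $\gamma\geq 1$, and since right multiplication by such a pure-imaginary unit $e_\gamma$ is skew-adjoint (again by Lemma~\ref{l:adjointconj}, as $\overline{e_\gamma}=-e_\gamma$), the value $\scp{w_i,w_ie_\gamma}$ equals its own negative and hence vanishes.

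Finally, each $e_\alpha\in\mathbb{D}$ commutes with the $G$-action, so $(w_ie_\alpha)g=(w_ig)e_\alpha$, and Lemma~\ref{l:adjointconj} together with $e_\alpha\overline{e_\alpha}=1$ yields
\[
\scp{(w_ie_\alpha)g,w_ie_\alpha}=\scp{(w_ig)e_\alpha,w_ie_\alpha}=\scp{w_ig,w_ie_\alpha\overline{e_\alpha}}=\scp{w_ig,w_i}.
\]
Summing over $i$ and $\alpha$ gives $\sigma(e_Hg)=(\dim_\reals\mathbb{D})\sum_i\scp{w_ig,w_i}=\ipcf{\sigma,\sigma}\sum_i\scp{w_ig,w_i}$, as required. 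The main subtlety lies in the second step: extracting a real orthonormal basis of $W$ from a $\mathbb{D}$-basis whose only given properties are $\scp{w_i,w_i}=1$ and cross-orthogonality. That this works comes down to the skew-adjointness of the pure-imaginary units inherited from Lemma~\ref{l:adjointconj}.
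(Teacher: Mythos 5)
Your proof is correct and follows essentially the same route as the paper's: both build a real orthonormal basis $\{w_ie_\alpha\}$ of $W$ from the $\mathbb{D}$-basis via the skew-adjointness of the pure imaginary units (Lemma~\ref{l:adjointconj}), and both reduce $\scp{(w_ie_\alpha)g,w_ie_\alpha}$ to $\scp{w_ig,w_i}$ using that $\mathbb{D}$ commutes with $G$. The only (cosmetic) difference is that you restrict the trace to $W$ via cyclicity and self-adjointness of $e_H$, whereas the paper extends the basis of $W$ to one of $S$ and observes that the extra terms are killed by $e_H$.
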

Before beginning with the proof, let us show how to construct 
a basis as in the theorem:
Begin with some $w_1\in W$ such that $\scp{w_1,w_1}=1$.
The orthogonal complement $U$ of 
$w_1 \mathbb{D}$ is closed under multiplication with $\mathbb{D}$,
since $\scp{ud,w_1}=\scp{u,w_1 \overline{d}}= 0$
for $u\in U$ and $d\in \mathbb{D}$.
By induction on the dimension, we find a basis in $U$ with the 
required properties, and thus one in $W$.

The case $m=1$ of the theorem 
is worth mentioning as a separate corollary:
\begin{cor}\label{c:cosine_spherical}
  Let $S$ be a simple euclidean $G$\nbd space with character 
  $\sigma$
  and essential Wythoff dimension $m=1$.
  Then for any $w\in W=\Fix_S(H)$ with
  $\scp{w,w}=1$ we have
  \[\scp{wg,w} = \frac{ \sigma(e_Hg) 
                     }{ \ipcf{\sigma,\sigma} 
                      }.
  \]
\end{cor}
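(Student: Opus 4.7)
The plan is to obtain the corollary as the immediate $m=1$ specialization of Theorem~\ref{t:sumcosinechar}. First I would verify the hypothesis: since the essential Wythoff dimension is by definition $\dim_{\mathbb{D}}W$, the assumption $m=1$ means $W = w\mathbb{D}$ is cyclic over $\mathbb{D}$. Any $w\in W$ with $\scp{w,w}=1$ is thus by itself a $\mathbb{D}$-basis of $W$, and the orthogonality condition $\scp{w_i d_1, w_j d_2}=0$ for $i\neq j$ is vacuous. The theorem then reads $\sigma(e_H g) = \ipcf{\sigma,\sigma}\scp{wg,w}$, and dividing by the nonzero scalar $\ipcf{\sigma,\sigma}\in\{1,2,4\}$ (depending on whether $\mathbb{D}$ is $\reals$, $\compl$ or $\quats$) yields the displayed formula.

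Since the corollary itself is a one-step specialization, there is no real obstacle at this stage; the substance lies in the theorem, so I would sketch that proof as well. The natural approach is to exploit that $e_H = (1/\abs{H})\sum_{h\in H}h$ acts on any $\reals G$-module as the orthogonal projection onto the $H$-fixed subspace. Then $\sigma(e_H g) = \tr_S(e_H g) = \tr_S(g e_H)$ by cyclicity of the trace, and since $g e_H$ has image inside $W$, only an $\reals$-orthonormal basis of $W$ contributes to the trace. Taking such a basis of the form $\{w_i d_k\}$, where $\{d_k\}$ is an $\reals$-orthonormal basis of $\mathbb{D}$, I would evaluate each diagonal term $\scp{w_i d_k g e_H, w_i d_k} = \scp{w_i d_k g, w_i d_k}$ using that the $G$- and $\mathbb{D}$-actions on $S$ commute, and that $d_k^t = \overline{d_k}$ by Lemma~\ref{l:adjointconj}; the cross terms between distinct $w_i$'s vanish by the chosen orthogonality, while each diagonal contribution collapses to $\lvert d_k\rvert^2 \scp{w_i g, w_i}$. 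Summing over $k$ produces the prefactor $\dim_{\reals}\mathbb{D} = \ipcf{\sigma,\sigma}$, which gives the theorem and hence the corollary.

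The main technical point to watch out for will be the interaction between the $\mathbb{D}$-action and the inner product: one must use Lemma~\ref{l:adjointconj} carefully to move $d_k$ across $\scp{\,\cdot\,,\,\cdot\,}$ as $\overline{d_k}$, and confirm that $d_k \overline{d_k} = \lvert d_k\rvert^2\in\reals_{\geq 0}$ acts on $S$ as a real scalar. Once this is in place, the collapsing of the sum over $k$ to $\dim_{\reals}\mathbb{D}$ is routine and, via Lemma~\ref{l:realreps}, equals $\ipcf{\sigma,\sigma}$.
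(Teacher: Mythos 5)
Your proposal is correct and follows essentially the same route as the paper: the corollary is obtained by specializing Theorem~\ref{t:sumcosinechar} to $m=1$ (where any norm-one $w$ is a $\mathbb{D}$-basis of $W$ and the cross-orthogonality condition is vacuous), and your sketch of the theorem itself---computing $\tr_S(e_Hg)$ over a real orthonormal basis $\{w_i d_k\}$ of $W$ with $\{d_k\}$ a real basis of $\mathbb{D}$, using $d^t=\overline{d}$ and the commuting $\mathbb{D}$-action to collapse the sum to the prefactor $\dim_{\reals}\mathbb{D}=\ipcf{\sigma,\sigma}$---matches the paper's argument in all essentials.
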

Thus the cosine matrix of the corresponding pure realization
can be expressed in terms of the character of the 
corresponding irreducible representation.

To put Corollary~\ref{c:cosine_spherical} in perspective, 
we recall the notions of \emph{Gelfand pairs} and
\emph{spherical functions}.
(See~\cite[VII.1]{macdonald95} 
 or~\cite{ceccherini_etal08HAFG}
 for more on Gelfand pairs and spherical functions.)
Let $\pi$ be the permutation character of $G$ on $\Omega$
(we can think of $\Omega$ as the set of right cosets of $H$
in $G$ here).
The pair
$(G,H)$ is called a \emph{Gelfand pair},
if $\pi$ is multiplicity free (as $G$\nbd module over $\compl$),
that is, if 
$\ipcf{\pi,\chi}\leq 1$ for all  
$\chi\in \Irr(G)$.
(In our terminology, this is equivalent to
all essential Wythoff dimensions being $1$,
and the Wythoff dimensions itself are $1$
or $2$.)
If $\ipcf{\pi,\chi}=1$, then
the corresponding \emph{spherical function} $s_{\chi}$
is defined by
\[ s_{\chi}(g) = \chi(e_Hg) 
               = \frac{1}{\abs{H}}
                 \sum_{h\in H} \chi(hg).
\]
Thus Corollary~\ref{c:cosine_spherical} says that
if $S$ is of real type, then the entries of the corresponding 
cosine vector
are values of the spherical function $s_{\chi}$,
and if $S$ is of complex type, then the values of the 
cosine vector are the real parts of the spherical function.
It is well known that spherical functions can be expressed using
a $G$\nbd invariant inner product~\cite[VII (1.6)]{macdonald95}.

For example, it is a remarkable fact that the 
irreducible representations
of all finite Coxeter groups are of real type,
and it is another remarkable fact that the automorphism group
of almost every classical regular polytope
acts multiplicity freely on the vertices of the polytope;
the only exception is the $120$-cell.
In the other cases, the cosine vectors of the pure realizations
are thus the spherical functions.
These cosine vectors have been computed by 
McMullen~\cite{mcmullen89,mcmullen11,mcmullen14}.

Notice that when $\pi = (1_H)^G$ has a constituent $\sigma$
of quaternion type, then $(G,H)$ can not be a Gelfand pair,
since then $\sigma = 2\chi$ and $\ipcf{(1_H)^G,\chi}$
is a multiple of $2$.
We may say that $(G,H)$ is a Gelfand pair over $\reals$,
if $m_{\sigma}\in \{0,1\}$ for $\sigma\in \Irr_{\reals}G$,
that is, all essential Wythoff dimensions are $0$ or $1$.

\begin{proof}[Proof of Theorem~\ref{t:sumcosinechar}]
  Suppose $\overline{d}= -d$ for $d\in \mathbb{D}$.
  Then $\scp{vd,v}= \scp{v,v\overline{d}} = -\scp{v,vd}
        = -\scp{vd,v}$
  and thus $\scp{vd,v}=0$.
  We now choose a basis $B$ of $\mathbb{D}$ over $\reals$.
  If $\mathbb{D}=\reals$, we choose $B=\{1\}$,
  if $\mathbb{D}=\compl$, we choose $B=\{1,i\}$,
  and if $\mathbb{D}= \quats$, we choose
  $B=\{1,i,j,k\}$.
  In each case, it follows that
  $\scp{vb,vc}=0$ for $b\neq c\in B$
  and $\scp{vb,wb}=\scp{v,w}$.
  Thus $\{w_ib \mid i=1,\dotsc, m, b\in B\}$
  is an orthonormal basis of $W$ over $\reals$.
  Extend this basis by some set $X$ (say) to an orthonormal
  basis of the whole space $S$.
  For any $\reals$\nbd linear map $\alpha\colon S\to S$
  we have 
  \[ \tr(\alpha) = \sum_{i, b}\scp{w_ib\alpha,w_ib} 
                  + \sum_{x\in X}\scp{x\alpha,x}.
  \]
  We apply this to the map induced by $e_Hg$.
  Since $xe_H= 0$ for $x\not\in W$ and
  $we_H=w$ for $w\in W$, we get
  \begin{align*}
    \sigma(e_Hg)
       = \tr(e_Hg)
       = \sum_{i=1}^m \sum_{b\in B} \scp{w_ibe_Hg,w_ib}
      &= \sum_{i=1}^m \sum_{b\in B} \scp{w_igb,w_ib}
      \\
      &= \sum_{i=1}^m \sum_{b\in B} \scp{w_ig,w_i}
      \\
      &= \abs{B} \sum_{i=1}^m \scp{w_ig,w_i}
      \\
      &= \ipcf{\sigma,\sigma} \sum_{i=1}^m \scp{w_ig,w_i},
  \end{align*}
  as claimed.
\end{proof}

It follows from Corollary~\ref{c:cosine_spherical} that
the values of the cosine vector are algebraic numbers,
if $m=1$.
This confirms a ``guess'' of 
McMullen~\cite[Remark~9.4]{mcmullen14}.
We can say somewhat more:
It is known~\cite[VII(1.10)]{macdonald95} that 
$(\abs{HgH}/\abs{H})s_{\chi}(g)$ is an algebraic integer 
for spherical functions $s_{\chi}$.
We can extend this to the case where the essential Wythoff dimension
is $1$. 
\begin{cor}
  Let $S$ be an irreducible euclidean $G$\nbd space
  with essential Wythoff dimension $m=1$
  and let $w\in W= \Fix_S(H)$ have norm $1$. 
  Then 
  \[ \frac{\abs{HgH \cup Hg^{-1}H}}{\abs{H}} \scp{wg,w}
  \] 
  is an algebraic integer.
\end{cor}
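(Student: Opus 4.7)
My plan is to recognize $(N/|H|)\scp{wg,w}$, where $N = \abs{HgH \cup Hg^{-1}H}$, as the scalar by which an element of the integral endomorphism ring $E := \enmo_{\ints G}(\ints\Omega)$ acts on the one\nbd dimensional $\mathbb{D}$\nbd module $W$, and then invoke the standard fact that such scalars are algebraic integers.

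First I set $T := \sum_{x \in HgH \cup Hg^{-1}H} x \in \ints G$; this element is $H$\nbd biinvariant and fixed by the anti\nbd involution $x \mapsto x^{-1}$. Since $w \in W$, $\scp{wx,w}$ depends only on $HxH$, and $\scp{wg^{-1},w} = \scp{w,wg} = \scp{wg,w}$ by symmetry of the real inner product, so $\scp{wT,w} = N\scp{wg,w}$ and the claim reduces to showing $\scp{wT,w}/|H|$ is an algebraic integer. Because $T = e_HT = Te_H$, right multiplication by $T$ sends $S$ into $W$; this restriction commutes with $\mathbb{D}$, so since $\dim_{\mathbb{D}}W = 1$ it equals multiplication by some $z_T \in \mathbb{D}$. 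The self\nbd adjointness $T^{*} = T$, together with Lemma~\ref{l:adjointconj}, forces $\overline{z_T} = z_T$, so $z_T \in \reals$ and $\scp{wT,w} = z_T$.

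Next I compare $T$ with an element of $E$. Via Frobenius reciprocity $E \iso (\ints\Omega)^H$, I let $\phi_T \in E$ correspond to the orbit sum $\psi_T := \sum_{Hx \subseteq HgH \cup Hg^{-1}H} Hx$. An elementary counting argument gives $T \cdot H = |H|\psi_T = |H|\phi_T(H)$ in $\ints\Omega$, since each coset $Hy$ in the support of $\psi_T$ is hit by exactly $|H|$ elements of $\mathrm{supp}(T)$. Applying the central idempotent $e_\sigma$ yields $T(e_\sigma H) = |H|\phi_T(e_\sigma H)$. A short calculation using $m_\sigma = 1$ shows $\scp{e_\sigma H, H} = \sigma(1)/(\ipcf{\sigma,\sigma}[G:H]) > 0$, so $e_\sigma H$ is a nonzero element of $W$ and hence $\mathbb{D}$\nbd generates $W$. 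Both $T|_W$ and $|H|\phi_T|_W$ are $\mathbb{D}$\nbd linear and agree on this generator, so they agree everywhere: $z_T = |H|\, d_T$, where $d_T := \phi_T|_W \in \mathbb{D}$.

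Finally, $E \subseteq \enmo_\ints(\ints\Omega)$ is a finitely generated $\ints$\nbd module, and restriction to $W$ is a ring homomorphism $E \to \enmo_{\mathbb{D}}(W) \iso \mathbb{D}^{\mathrm{op}}$ whose image is a finitely generated $\ints$\nbd subring of $\mathbb{D}$. Every element of such a subring is integral over $\ints$ by Cayley--Hamilton applied to left multiplication on the underlying $\ints$\nbd module, so $d_T$ is an algebraic integer, and $(N/|H|)\scp{wg,w} = z_T/|H| = d_T$. The main obstacle is the middle step: $T$ acts by (non\nbd equivariant) right multiplication while $\phi_T$ is $G$\nbd equivariant, so the identification $T|_W = |H|\phi_T|_W$ is not formal; it depends on testing both maps at the single vector $e_\sigma H$ and on the hypothesis $m_\sigma = 1$ to guarantee this vector $\mathbb{D}$\nbd generates all of $W$.
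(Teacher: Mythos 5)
Your proof is correct and takes essentially the same route as the paper's: both identify $\tfrac{\abs{HgH\cup Hg^{-1}H}}{\abs{H}}\scp{wg,w}$ with the (real, by self-adjointness and Lemma~\ref{l:adjointconj}) scalar by which the symmetrized double-coset element acts on the Wythoff space $W$, one-dimensional over $\mathbb{D}$ since $m=1$, and then deduce integrality from the fact that the integral double-coset (Hecke) algebra is a finitely generated $\ints$\nbd module --- the paper citing Macdonald for the integrality of the structure constants of the elements $e_K=\frac{1}{\abs{H}}\sum_{x\in K}x$, while you rederive this via $\enmo_{\ints G}(\ints\Omega)$. The only blemish is the harmless slip $\scp{e_\sigma H,H}=\sigma(1)/[G:H]$ (your extra factor $\ipcf{\sigma,\sigma}^{-1}$ is spurious), which does not affect the argument since only nonvanishing of $e_\sigma H$ is needed.
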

Notice that $\abs{HgH\cup Hg^{-1}H}/\abs{H}$
is the size of the corresponding layer.
Another formulation of the corollary is thus: 
the component-wise product
of a cosine vector of a pure realization of essential
Wythoff dimension $1$ with the layer vector 
has algebraic integers as entries.
\begin{proof}
  For each double coset $K=HgH$, let
  \[ e_K = \frac{1}{\abs{H}} \sum_{x\in K} x \in \reals G.
  \]
  It is known~\cite[remarks before VII(1.10)]{macdonald95} 
  that the product of two such elements
  is a $\ints$\nbd linear combination of these elements.
  Thus $\ints[e_K \mid K \in \dblcs{H}{G}{H}]$
  is a ring which is finitely generated as $\ints$\nbd module,
  so its elements are integral.
  
  Let $W=Se_H \iso \mathbb{D}=\enmo_{\reals G}(S)$ 
  be the Wythoff space.
  Then $e_K= e_{HgH}$ acts as some 
  $\mathbb{D}$\nbd linear map 
  on $W$, and can thus be identified with some
  $d\in \mathbb{D}$.
  Then $e_K + e_{K^{-1}} = e_{HgH} + e_{Hg^{-1}H}$ acts as the
  scalar $\lambda = d + \overline{d}$ on $W$.
  Since $e_K$ is integral over $\ints$,
  it follows that $d$ and $\lambda$ are integral over $\ints$.
  In the case where $d\in \reals$ we have
  \[d = \frac{\sigma(e_K) }{ \ipcf{\sigma,\sigma} }
     =  \frac{ \sigma(e_He_K)}{ \ipcf{\sigma,\sigma} }
   =  \frac{1}{\abs{H}} \sum_{x\in K} \scp{wx,w}
   = \frac{\abs{K}}{\abs{H}}\scp{wg,w},
  \]
   and in any case we have 
   \[ \lambda = \frac{ \sigma(e_K + e_{K^{-1}}) 
                     }{ \ipcf{\sigma,\sigma} }
              = 2 \frac{\sigma(e_K)}{ \ipcf{\sigma,\sigma} }
              = 2 \frac{\abs{K}}{\abs{H}} \scp{wg,w}.
   \]
   Notice that if $K$ is symmetric, then 
   necessarily $d\in \reals$.
  The result follows.  
\end{proof}

\section{On the realizations of the 600-cell}
\label{sec:600cell}
In this section we explain two observations of 
McMullen~\cite[Remark~9.3]{mcmullen11}
about the pure realizations of the 600-cell.
Namely, we have the following:
\begin{thm}\label{t:600cell}
  There is a ``natural'' bijection
  between the irreducible characters
  of the finite group $\SL(2,5)$
  and the pure realizations of the $600$\nbd cell.
  If $\phi\in \Irr(\SL(2,5))$, then the corresponding pure
  realization has dimension $\phi(1)^2$,
  and the entries of its cosine vector are of the
  form $\phi(u)/\phi(1)$, where $u$ runs through
  $\SL(2,5)$.
  (More precisely, we also have a natural bijection between
   the conjugacy classes of $\SL(2,5)$ and the layers
   of the $600$\nbd cell, 
   and $\phi(u)/\phi(1)$ is the value at the layer
   corresponding to the conjugacy class of $u$.)
\end{thm}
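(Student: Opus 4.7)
The plan is to exploit the classical identification of the vertex set $\Omega$ of the $600$\nbd cell with the binary icosahedral group $2I \iso \SL(2,5)$, viewed as a subset of the unit quaternions. Under this identification, the rotation subgroup of $W(H_4)$ is $G^{+} = (2I \times 2I)/\{\pm(1,1)\}$ acting by $(u,v)\cdot x = uxv^{-1}$, and the full symmetry group $G = W(H_4)$ is the extension of $G^+$ by quaternion conjugation $\tau\colon x \mapsto \overline{x} = x^{-1}$, which satisfies $\tau^{-1}(u,v)\tau = (v,u)$. The stabilizer of $\alpha = 1$ in $G^{+}$ is $H = \Delta(2I)/\{\pm 1\} \iso A_5$, acting on $\Omega = 2I$ by conjugation, and $\tau$ fixes $1$, so the full stabilizer is $H\langle\tau\rangle$.

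The main step is to apply the Peter-Weyl theorem: over $\compl$,
\[ \compl[2I] \iso \bigoplus_{\phi \in \Irr(\SL(2,5))} V_\phi \otimes V_\phi^{*}, \]
each summand being irreducible of dimension $\phi(1)^2$ as a $\compl G^{+}$\nbd module. To descend to the reals I would compute the Frobenius-Schur indicator: the sum $\sum_{(u,v)\in G^{+}} \phi(u^2)\overline{\phi(v^2)}$ decouples as a product of two sums over $2I$, yielding $\nu_2(\phi \otimes \overline{\phi}) = \nu_2(\phi)^2 = 1$ (since every irreducible character of $\SL(2,5)$ is of real or quaternion type). Hence each summand is of real type over $\reals G^{+}$. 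Moreover, since $\tau$ swaps the tensor factors and thus fixes each $V_\phi \otimes V_\phi^{*}$ up to isomorphism, standard Clifford theory yields that each extends irreducibly to an $\reals G$\nbd module; and the dimension count $\sum_{\phi} \phi(1)^2 = 120 = \dim\reals\Omega$ shows these exhaust the constituents of $\reals\Omega$, each with multiplicity~$1$. This establishes the bijection between $\Irr(\SL(2,5))$ and pure real realizations of the $600$\nbd cell, with dimension $\phi(1)^2$.

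For the cosine vectors, I would first observe that layers correspond to conjugacy classes of $\SL(2,5)$: the $H$-orbits on $\Omega = 2I$ are the conjugacy classes (via conjugation), and $\tau$ fixes each class because $\SL(2,5)$ is ambivalent (all its characters being real-valued). Since each $\sigma = \phi \otimes \overline{\phi}$ has essential Wythoff dimension $m_\sigma = 1$, Corollary~\ref{c:cosine_spherical} applies. Taking the $G^{+}$\nbd element $g = (x,1)$ that carries $1$ to $x$ and using the Schur orthogonality identity
\[ \sum_{u \in 2I} \phi(ux)\overline{\phi(u)} = \frac{\abs{2I}}{\phi(1)}\,\phi(x), \]
together with $\ipcf{\sigma,\sigma} = 1$, yields the cosine value $\phi(x)/\phi(1)$ at the layer of $x$, as claimed.

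The principal obstacle I anticipate is the bookkeeping around the transition from the rotation group $G^{+}$ to the full Coxeter group $G$: one must verify that the unique (up to scalar) $H$-invariant in $V_\phi \otimes V_\phi^{*}$ is $\tau$-invariant for the appropriate extension of the representation, so that the essential Wythoff dimension remains $1$ over $G$ and Corollary~\ref{c:cosine_spherical} indeed applies in its stated form; this follows from the fact that $\tau$ normalizes $H$ and the $H$-invariant is canonical up to sign in an absolutely irreducible module.
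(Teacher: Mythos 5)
Your overall route is essentially the paper's: both arguments reduce the $600$\nbd cell to the two\nbd sided action of $\SL(2,5)\wr C_2$ on the group algebra of the binary icosahedral group, identify the constituents of the permutation module as (extensions of) the characters $\phi\times\phi$, match layers with conjugacy classes via ambivalence, and evaluate the cosine vector through Corollary~\ref{c:cosine_spherical} plus Schur orthogonality. Your Peter--Weyl formulation $\compl[2I]\iso\bigoplus_{\phi}V_\phi\otimes V_\phi^{*}$ is a clean substitute for the paper's Lemma~\ref{l:constpermchar} and does yield multiplicity one and the dimensions $\phi(1)^2$ directly.

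There is, however, one concrete gap in the cosine computation. The vertex stabilizer is not the diagonal copy of $A_5$ but $H\erz{\tau}$, so $\sigma(e_Hg)$ in Corollary~\ref{c:cosine_spherical} averages over \emph{two} cosets: the diagonal elements $(u,u)$ and the reflections $\tau(u,u)$. Your displayed identity $\sum_u\phi(ux)\overline{\phi(u)}=(\abs{2I}/\phi(1))\phi(x)$ evaluates only the first half. The second half is the twisted sum $\eps\cdot\frac{1}{\abs{U}}\sum_u\phi(uxu)$, where $\eps=\pm1$ records \emph{which} of the two extensions of $\phi\times\phi$ actually occurs in $\reals\Omega$. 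One must check (a) that the occurring extension has $\eps=\nu_2(\phi)$, and (b) that $\frac{1}{\abs{U}}\sum_u\phi(uxu)=\nu_2(\phi)\phi(x)/\phi(1)$, because the central element $\frac{1}{\abs{U}}\sum_v v^2$ acts on $V_\phi$ as the scalar $\nu_2(\phi)/\phi(1)$. Only then do the two signs cancel so that both cosets contribute the same value $\phi(x)/\phi(1)$; this is precisely the extra step in the paper's final lemma. The obstacle you do anticipate (the $\tau$\nbd invariance of the Wythoff vector) is genuine but is the easier issue: the fixed vector is the class function $\phi$ itself inside $\compl[2I]$, which is $\tau$\nbd invariant because $\phi$ is real\nbd valued. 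Finally, your Frobenius--Schur computation is carried out over $G^{+}$ only; to conclude real type over the full group $G$ you should add, for instance, that a real\nbd valued irreducible character occurring with multiplicity one in a real permutation character cannot be of quaternion type.
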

This ``explains'' that the dimension of each pure realization
is a square $q^2$, and that its cosine vector has entries
of the form $a/q$, where $a$ is an algebraic integer
(in fact, $a\in \ints[\tau]$ with $\tau= (-1+\sqrt{5})/2$).

We have to warn the reader that the proof of 
Theorem~\ref{t:600cell},
while not difficult, is rather long,
in particular longer than working out the cosine vectors directly.
On the other hand, we work out the realization cone of a 
class of $G$-sets, of which the $600$\nbd cell is an example.

We will use that the automorphism group of the $600$\nbd cell,
the reflection group of type $\mathrm{H}_4$,
is the factor group of a certain wreath product:
Let $U$ be a group.
The cyclic group $C_2=\{1,t\}$ of order $2$ acts on the
direct product $U\times U$ be exchanging components, that is
$(u,v)^{t} = (v,u)$.
The corresponding semidirect product
of $C_2$ and $U\times U$ is the wreath product,
denoted by $U \wr C_2$.
The following lemma is of course known, but for completeness,
 we work out a large part of the proof:
\begin{lemma}
  Set $U=\SL(2,5)$ and
  $\widehat{G} = U\wr C_2$, 
  and let $\widehat{H}$ be the subgroup of 
  $\widehat{G}$ generated by 
  the pairs $\{(u,u) \mid u\in U\}$
  and by $C_2$.
  (Notice that $\widehat{H}\iso C_2 \times U$.)
  The automorphism group of the 600-cell is isomorphic 
  to the factor group $\widehat{G}/\Z(\widehat{G})$
  in such a way that the
  stabilizer of a vertex is identified with 
  $\widehat{H}/\Z(\widehat{G})$.  
\end{lemma}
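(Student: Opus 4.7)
The plan is to realize the $600$-cell concretely by identifying $\reals^4$ with the quaternions $\quats$ and taking the vertex set to be the binary icosahedral group $2I\subset \quats$, which is well known to be isomorphic to $U=\SL(2,5)$. On $\quats$ I would define an action of $\widehat{G}$ by isometries, show that its kernel is $\Z(\widehat{G})$, identify the image with the full symmetry group of the $600$-cell, and finally check that the stabilizer of the vertex $1\in 2I$ corresponds to $\widehat{H}$.

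For the action, I send each pair $(u,v)\in U\times U$ to the orthogonal map $q\mapsto uqv^{-1}$ and the generator $t$ of $C_2$ to quaternionic conjugation $q\mapsto \overline{q}$. Both preserve the norm on $\quats$ (the quaternion norm is multiplicative and conjugation is an isometry), and both map $2I$ to itself, since $2I$ is a subgroup of the unit quaternions and is closed under quaternionic conjugation (using $\overline{q}=q^{-1}$ for unit $q$). That these assignments extend to a homomorphism $\widehat{G}\to \Og(\quats)$ reduces to the identity $\overline{uq}=\overline{q}\, u^{-1}$ for unit $u$, which shows that conjugation by $t$ swaps the left and right factors exactly as required in $U\wr C_2$; left and right multiplications obviously commute with each other.

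The kernel computation is easy: $(u,v)$ acts trivially iff $uqv^{-1}=q$ for all $q\in \quats$, which forces $u=v$ and then $u\in\{\pm 1\}$; and no element involving the conjugation generator can act trivially, because conjugation is orientation reversing on $\quats$ while all left/right multiplications are orientation preserving. Hence $\Ker = \erz{(-1,-1)}=\Z(\widehat{G})$. Surjectivity onto the symmetry group of the $600$-cell then follows from an order count: $\abs{\widehat{G}/\Z(\widehat{G})} = 2\cdot 120^2/2 = 14400$ equals the order of the reflection group of type $H_4$, so the injective image must be the full symmetry group. For the stabilizer, one checks that both a pair $(u,v)$ alone and such a pair composed with conjugation fix $1\in 2I$ if and only if $uv^{-1}=1$, i.e.\ $v=u$. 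Hence the preimage in $\widehat{G}$ of the vertex stabilizer is precisely $\widehat{H}$, and the claim follows by passing to the quotient modulo $\Z(\widehat{G})\subseteq \widehat{H}$.

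The main obstacle is justifying that the image really exhausts the full symmetry group. The shortest route is the order comparison above, which relies on knowing $\abs{H_4}=14400$ and on confirming that conjugation contributes an extra factor of two beyond the image of $U\times U$ (it does, being orientation reversing). A more self-contained alternative would be to exhibit, among these transformations, reflections through a set of $H_4$-defining mirrors, but this is more labor-intensive and adds little insight over the order argument.
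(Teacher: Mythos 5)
Your proof is correct, and it sets up the same quaternionic model as the paper: both send $(u,v)\in U\times U$ to a left/right multiplication $q\mapsto uqv^{-1}$ on $\quats$ and the wreathing involution to quaternionic conjugation, compute the kernel $\erz{(-1,-1)}=\Z(\widehat{G})$, and identify the vertex stabilizer as $\widehat{H}$ by fixing the vertex $1\in\quats$. Where you genuinely diverge is the surjectivity step. The paper works in the opposite direction: it writes down an explicit simple root system $\alpha_1,\dotsc,\alpha_4$ of type $\mathrm{H}_4$ inside the unit quaternions, checks that the reflection in a unit quaternion $\alpha$ is the map $x\mapsto -\alpha\overline{x}\alpha$, hence of the form ``conjugation composed with $\lambda_u\rho_v$,'' and then shows that the four generating reflections generate exactly the set $\{\id,\sigma\}\{\lambda_u\rho_v \mid u,v\in U\}$ with $U=\erz{\alpha_1,\alpha_2,\alpha_3}$; it leaves the identification $U\iso\SL(2,5)$ unproved. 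You instead take as known that the vertex set of the $600$-cell is the binary icosahedral group $2I\iso\SL(2,5)$ and that $\abs{\mathrm{H}_4}=14400$, observe that your transformations are isometries preserving $2I$ (hence lie in the symmetry group), and conclude by the order count $2\cdot 120^2/2=14400$, with the orientation-reversal of conjugation supplying the factor $2$ beyond the image of $U\times U$. Your route is shorter and cleaner but imports two standard external facts; the paper's route is more self-contained on the Coxeter-group side (it exhibits the mirrors explicitly, which also pins down the vertex orbit as the orbit of $1$) while outsourcing the group-theoretic identification of $U$. Both are complete modulo their respective citations, and your own remark correctly identifies the paper's alternative as the more labor-intensive one.
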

\begin{proof}
We can express the automorphism group of the 
$600$\nbd cell as a group of transformations on the
quaternions $\quats$.
For $u\in \quats$, let
$\lambda_u\colon \quats\to \quats$ and
$\rho_u\colon \quats \to \quats$ be the maps defined by
\[ x \lambda_u = \overline{u}x
   \quad \text{and} \quad
   x\rho_u = xu
   \quad (x\in \quats).
\]
Let $\sigma\colon \quats \to \quats$ be conjugation.

Let $U$ be a (finite) subgroup of the multiplicative
group $\quats^{*}$.
Mapping $t$ to $\sigma$ and 
$(u,v)\in U\times U$ to $\lambda_{u}\rho_u$
defines a group homomorphism from $U \wr C_2$
into $\GL_{\reals}(\quats)\iso \GL(4,\reals)$.
The kernel is $\erz{(-1,-1)}\subseteq U\times U$.

The reflection group of type $\mathrm{H}_4$ can be realized
as the image of such a homomorphism:
Let 
\[ \alpha_1 = j ,\quad
   \alpha_2 = \frac{1}{2}(ai+bj-k), \quad
   \alpha_3 = k, \quad 
   \alpha_4 = \frac{1}{2}(a + bi -k),
\]
where $a = 2\cos(2\pi/5) = (-1+\sqrt{5})/2$
and $b = 2\cos(4\pi/5) = (-1-\sqrt{5})/2$.
Then $\alpha_1$, $\dotsc$, $\alpha_4$ form a simple root
system of type $\mathrm{H}_4$.

Let $s_1$, $\dotsc$, $s_4$ be the reflections corresponding to 
$\alpha_1$, $\dotsc$, $\alpha_4$.
These generate the automorphism group $G$ of the $600$\nbd cell,
and the stabilizer of a vertex is $H=\erz{s_1,s_2,s_3}$.
(The vertices are all points in the orbit of $1=1_{\quats}$.)

The reflection corresponding to an element 
$\alpha\in \quats$ of norm $1$ 
is the map 
\[ x \mapsto - \alpha \overline{x} \alpha 
     = x\sigma \lambda_{-\overline{\alpha}}\rho_{\alpha}, 
\]
as is easily checked
(it sends $\alpha$ to $-\alpha$ and fixes
$i\alpha$, $j\alpha$ and $k\alpha$).
It follows that
\[ \erz{s_1,s_2,s_3,s_4}
   \subseteq \{\id_{\quats},\sigma\}
             \{ \lambda_u\rho_v \mid u, v \in U
             \},
\]
where $U$ is the group generated by 
$\alpha_1$, $\dotsc$, $\alpha_4$ and $-1$.

Since $\alpha_1^2=-1$ and $\alpha_4= (\alpha_1\alpha_2)^2$,
we see that 
$U=\erz{\alpha_1,\alpha_2,\alpha_3}$.
We see that the reflections $s_1$, $s_2$, $s_3$ generate the 
subgroup
\[ H = \{\id_{\quats},\sigma\}
                 \{\lambda_{u}\rho_u\mid u \in U\}
      .
\]
Then it is also not difficult to see that
\[\erz{s_1,s_2,s_3,s_4}
  = \{\id_{\quats},\sigma\}\{\lambda_u\rho_v\mid u, v \in U\}.
\]
We leave out the proof that $U\iso \SL(2,5)$.
Apart from this, the lemma is proved.
\end{proof}

We now slightly change notation.
Let $U$ be an arbitrary finite group,
let $G$ be the wreath product
$U \wr C_2$ and let $H\leq G$ be the subgroup
\[ H = \{1,t\}\{(u,u)\mid u\in U \} 
     \iso C_2 \times U.
\]
We will describe the realization cone of the $G$\nbd set
$[G:H]$ (the cosets of $H$ in $G$) for such $G$ and $H$.

Set $N= U\times U$, a normal subgroup of
$G$ of index $2$.
The irreducible characters of $N$ are of the form
$\phi\times \theta$ with
$\phi$, $\theta\in \Irr U$~\cite[Theorem~III.9.1]{simonreps}.
\begin{lemma}
  \begin{enumthm}
  \item If $\phi\neq \theta\in \Irr U$, then
        $(\phi\times \theta)^G \in \Irr G$.
  \item For $\phi\in \Irr U$, the character 
        $\phi\times \phi$ 
        has exactly two extensions to a character of $G$,
        namely 
        \[ \chi(t(u,v))
           = \phi(uv)
           \quad\text{and}\quad
           \chi(t(u,v))
           = -\phi(uv).\]
  \end{enumthm}
\end{lemma}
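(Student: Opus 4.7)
The plan is to apply Clifford theory to the normal subgroup $N=U\times U$ of index~$2$ in $G$, and then realize the extensions in~(ii) concretely via a tensor-square representation.

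For~(i), I would note that $t$ acts on $N$ by conjugation as the swap $(u,v)\mapsto (v,u)$, hence acts on $\Irr N$ by $\phi\times\theta\mapsto \theta\times\phi$. When $\phi\neq\theta$, the character $\phi\times\theta$ is not fixed under this action, so its inertia group in $G$ equals $N$ itself. Clifford's theorem on induction from the inertia subgroup then yields directly that $(\phi\times\theta)^G\in\Irr G$.

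For~(ii), the character $\phi\times\phi$ is $t$-invariant, so its inertia group is all of~$G$. Since $[G:N]=2$ is prime, it is a standard consequence of Clifford theory that $\phi\times\phi$ extends to a character of~$G$; the set of extensions is a torsor under $\Irr(G/N)$, so there are exactly two, differing by the nontrivial linear character of $G/N\iso C_2$, i.e.\ taking opposite values on $tN$. To identify them explicitly, let $\Phi\colon U\to\GL(V)$ afford $\phi$ and consider the representation $\Phi\otimes\Phi$ of $N$ on $V\otimes V$, which affords $\phi\times\phi$. Let $T\colon V\otimes V\to V\otimes V$ be the swap $x\otimes y\mapsto y\otimes x$. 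One checks $T^2=\id$ and $T(\Phi(u)\otimes\Phi(v))T^{-1}=\Phi(v)\otimes \Phi(u)$, which exactly mirrors the relations $t^2=1$ and $t(u,v)t=(v,u)$ in $G$. Hence sending $t\mapsto \pm T$ defines two extensions of $\Phi\otimes\Phi$ to representations of $G$.

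A short trace computation then produces the formulas: working in a basis $\{e_i\}$ of $V$, the diagonal entry of $T(\Phi(u)\otimes\Phi(v))$ at $e_i\otimes e_j$ is $\Phi(v)_{ij}\Phi(u)_{ji}$, and summing over $i,j$ gives $\tr(\Phi(u)\Phi(v))=\phi(uv)$ by the cyclic property of the trace. The other sign choice gives $-\phi(uv)$. Since the two extensions constructed restrict oppositely to $tN$, they must be the two extensions dictated by Clifford theory, completing the argument. The only nontrivial step is the verification that $T$ intertwines correctly, which is a direct calculation; no substantial obstacle is expected.
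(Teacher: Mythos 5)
Your proposal is correct and follows essentially the same route as the paper: Clifford theory for part (i) via the inertia group being $N$, and for part (ii) the explicit construction of the two extensions by letting $t$ act on $X\otimes X$ as $\pm$(the swap), followed by the same trace computation giving $\pm\phi(uv)$. The only cosmetic difference is that you invoke the torsor structure under $\Irr(G/N)$ to count extensions, while the paper cites the standard reference for the existence of exactly two extensions; both are standard and equivalent here.
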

\begin{proof}
  The first point is clear from Clifford theory
  ($(\phi\times \theta)^G$ denotes the Frobenius induced character).
  
  It is also known that $\phi\times \phi$ has two different 
  extensions
  to $G$~\cite[III.11]{simonreps}. 
  Here, we can describe these extensions explicitly.
  Let $X$ be a $\compl U$\nbd module affording 
  the character $\phi$.
  We may define an action of $t$ on $X\otimes X$ by
  $(x\otimes y)t = y \otimes x$
  or $(x\otimes y)t = -y\otimes x$.
  These are the two extensions to a representation of $G$.
  
  We treat the first case. Then
  \[ (x\times y)t(u,v) = yu\otimes xv.\]
  Suppose that $\{e_i\}$ is a basis of $X$
  and $e_i u = \sum_{j} d_{ij}(u)e_j$.
  Then $\{e_i\otimes e_j\}$ is a basis of $X\otimes X$,
  and we get for the trace of $t(u,v)$ on $X\otimes X$:
  \begin{align*}
    \chi(t(u,v)) 
      &= \sum_{i,j} d_{ji}(u)d_{ij}(v)
       = \sum_j d_{jj}(uv)
       = \phi(uv).\qedhere
  \end{align*} 
\end{proof}
Let $U$, $G$, $H$ and $N$ be as defined before the last lemma.
\begin{lemma}\label{l:constpermchar}
  If $\chi\in \Irr G$ with
  $\ipcf{\chi_H, 1}\neq 0$, then either
  $\chi = (\phi\times \overline{\phi})^G$
  with $\phi\neq \overline{\phi}\in \Irr U$,
  or $\chi_N = \phi\times \phi$
  with $\phi=\overline{\phi}\in \Irr U$ and
  $\chi(\sigma(u,v)) = \nu_2(\phi)\phi(uv)$.
  In both cases, $[\chi_H,1] = 1$.
\end{lemma}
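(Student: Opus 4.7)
The plan is to apply Clifford theory relative to the index-$2$ normal subgroup $N\trianglelefteq G$, and then to compute $\ipcf{\chi_H,1_H}_H$ directly using the explicit description of the elements of $H$.

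First I would invoke Clifford theory to see that for $\chi\in\Irr G$, either $\chi_N$ is irreducible or $\chi_N$ splits as the sum of the two $G$-conjugates of an irreducible constituent. Since the action of $G/N$ swaps the two factors, i.e.\ $(\phi\times\theta)^{t}=\theta\times\phi$, the preceding lemma covers both possibilities: either $\chi=(\phi\times\theta)^G$ with $\phi\neq\theta\in\Irr U$, or $\chi_N=\phi\times\phi$ and $\chi$ is one of the two extensions $\chi(t(u,v))=\pm\phi(uv)$.

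Next I would use the fact that the elements of $H$ are exactly the $(u,u)$ and the $t(u,u)$, $u\in U$, to write
\[
\ipcf{\chi_H,1_H}_H
 =\frac{1}{2\abs{U}}\sum_{u\in U}\bigl(\chi((u,u))+\chi(t(u,u))\bigr).
\]
In the induced case $\chi=(\phi\times\theta)^G$, the character $\chi$ vanishes on $t(u,u)\notin N$ while $\chi((u,u))=\phi(u)\theta(u)+\theta(u)\phi(u)$, so the sum collapses to $\ipcf{\phi,\overline{\theta}}_U$. This is nonzero (and then equal to $1$) precisely when $\theta=\overline{\phi}$, which together with $\phi\neq\theta$ produces the first alternative of the lemma.

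In the extension case one has $\chi((u,u))=\phi(u)^2$ and $\chi(t(u,u))=\pm\phi(u^2)$, so the inner product becomes
\[
\tfrac{1}{2}\bigl(\ipcf{\phi,\overline{\phi}}_U\pm\nu_2(\phi)\bigr).
\]
For this to be nonzero one needs $\phi=\overline{\phi}$, since otherwise both $\ipcf{\phi,\overline{\phi}}_U$ and $\nu_2(\phi)$ vanish; and then the sign in the extension must be chosen equal to $\nu_2(\phi)\in\{\pm 1\}$, which yields exactly $\chi(t(u,v))=\nu_2(\phi)\phi(uv)$ and value~$1$. The argument is essentially just two applications of the orthogonality relations and the Frobenius--Schur indicator formula, and the only real obstacle is bookkeeping, namely checking that the vanishing patterns line up so that the two alternatives of the lemma are mutually exclusive and exhaustive and that $\ipcf{\chi_H,1_H}_H=1$ in both surviving cases.
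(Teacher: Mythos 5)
Your proposal is correct and follows essentially the same route as the paper: split into the induced case and the extension case via Clifford theory, then compute $\ipcf{\chi_H,1_H}$ explicitly and read off when it is nonzero. The only (harmless) difference is in the induced case, where you evaluate $(\phi\times\theta)^G$ directly on the elements $(u,u)$ and $t(u,u)$ of $H$ (using that induced characters vanish off $N$ and that $\chi((u,u))=2\phi(u)\theta(u)$), whereas the paper reaches the same quantity $\ipcf{\phi,\overline{\theta}}_U$ via Mackey's formula for $G=HN$ followed by Frobenius reciprocity.
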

(Here $\nu_2(\phi)$ denotes the Frobenius-Schur indicator of 
$\phi$.
Recall that for $\phi\in \Irr U$,
\[ \nu_2(\phi) = \frac{1}{2\abs{U}} \sum_{u\in  U} \phi(u^2)
   \in \{0,\pm1\},
\]
and $\nu_2(\phi)\neq 0 $ if and only if 
$\phi=\overline{\phi}$~\cite[Theorem~III.5.1]{simonreps}.)

Lemma~\ref{l:constpermchar} 
explains the first part of Theorem~\ref{t:600cell}.
Since $U=\SL(2,5)$ has only real-valued characters,
every pure realization corresponds to a
$\phi\in \Irr U$ and has dimension $\phi(1)^2$.

In the general case, notice that the realizations correspond to
$\Irr_{\reals} U$. 
The Wythoff dimension is $1$ for all pure realizations.
(In particular, the corresponding irreducible representations
 are of real type.)
Thus the realization cone is polyhedral, in fact a direct product
of copies of $\reals_{\geq 0}$ by Theorem~\ref{t:homog_subconedim}.
\begin{proof}[Proof of Lemma~\ref{l:constpermchar}]
  First, suppose that 
  $\chi = (\phi\times \theta)^G$ with $\phi\neq \theta\in \Irr U$.
  Then
  \begin{align*}
    \ipcf{\chi_H,1_H}
      = \ipcf*{\big((\phi\times\theta)^G\big)_H,1_H}
      & = \ipcf*{\big((\phi\times\theta)_{H\cap N}\big)^H,1_H}
      \\
      &= \ipcf{(\phi\times\theta)_{H\cap N},1_{H\cap N}}
       \\
      &= \frac{1}{\abs{U}}
         \sum_{u\in U} \phi(u)\theta(u)
      \\   
      &= \ipcf{\phi, \overline{\theta}}_U
       = \delta_{\phi,\overline{\theta}}.
  \end{align*}
  Here the second equality follows from $G=HN$ and 
  Mackey's formula, and the third equality follows from
  Frobenius reciprocity.
  Thus $\theta = \overline{\phi}\neq \phi$ when 
  $\ipcf{\chi_H,1_H}\neq0$.
  
  Second, suppose that $\chi$ extends $\phi\times \phi$,
  and that $\chi(t(u,v))= \eps \phi(uv)$.
  Then
  \begin{align*}
    \ipcf{\chi_H,1_H}
      &= \frac{1}{2\abs{U}}
         \sum_{u\in U} \big(\chi((u,u)) + \chi(t(u,u))\big)
      \\&= \frac{1}{2\abs{U}} \left( \sum_{u\in U} \phi(u)^2
           + \sum_{u\in U} \eps \phi(u^2) \right)
        = \frac{1}{2}( \ipcf{\phi, \overline{\phi}} 
                       + \eps \nu_2(\phi) ).
  \end{align*}
  The last expression is non-zero only when
  $\phi=\overline{\phi}$ and
  $\eps = \nu_2(\phi)$, and in this case $\ipcf{\chi_H,1}=1$.
\end{proof}
The next result finishes the proof of Theorem~\ref{t:600cell}.
As in the last results, we only assume that $G= U \wr C_2$
for some finite group $U$, and that
$H=C_2\{(u,u)\mid u\in U\}$.
We notice in passing that in this situation, 
\[ Ht(x,y)H = H(x,y)H \leftrightarrow 
   (x^{-1}y)^U \cup (y^{-1}x)^U
\]
defines a bijection between double cosets of $H$ and 
``symmetrized'' conjugacy classes of $U$.
The double cosets of $H$ in turn correspond to the layers.
(If $U=\SL(2,5)$, then all conjugacy classes of $U$ 
 are real, that is, $u$ and $u^{-1}$ are always conjugate.)
The following lemma describes an arbitrary entry of a cosine vector
of a pure realization.
\begin{lemma}
  Let $V$ be an irreducible euclidean $G$\nbd space
  and suppose the non-zero element 
  $w\in V$ is fixed by $H$.
  Then the character $\chi$ of $V$ is irreducible.
  Let $\phi\in \Irr U$ be the character defined in
  Lemma~\ref{l:constpermchar}.
  Let $n=(x,y)\in N=U\times U$.
  Then 
  \[ \frac{\scp{wn,w}}{\scp{w,w}} =
      \frac{\phi(x^{-1}y)}{\phi(1)}.
  \]
\end{lemma}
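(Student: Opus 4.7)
The plan is to identify the complex character of $V$ via Lemma~\ref{l:constpermchar}, verify it is of real type so that $V\otimes_\reals\compl$ is irreducible (this gives the first assertion), and then invoke Corollary~\ref{c:cosine_spherical} to express $\scp{wn,w}/\scp{w,w}$ as an explicit character value.

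Since $w\in V^H$ is non-zero, any irreducible complex constituent $\chi$ of $V\otimes_\reals\compl$ has $\ipcf{\chi_H,1_H}\geq 1$, so Lemma~\ref{l:constpermchar} forces $\chi$ into one of the two cases listed there, with $\ipcf{\chi_H,1_H}=1$. I would then compute the Frobenius--Schur indicator $\nu_2(\chi) = \frac{1}{\abs{G}}\sum_g \chi(g^2)$ by splitting the sum into $g\in N$ (where $g^2 = (u^2,v^2)$) and $g=t(u,v)$ (where $g^2 = (vu,uv)$). In case~(a) the $N$-contribution vanishes since $\nu_2(\phi)=0$, and the twisted contribution evaluates (using $\phi(vu)=\phi(uv)$ and $\ipcf{\phi,\phi}=1$) to $2\abs{U}^2 = \abs{G}$; in case~(b), with $\phi=\overline{\phi}$ and $\nu_2(\phi)^2=1$, both contributions evaluate to $\abs{U}^2$ and again sum to $\abs{G}$. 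Either way $\nu_2(\chi)=1$, so the character of $V$ is exactly $\chi$ and the essential Wythoff dimension is $\ipcf{\chi_H,1_H}/\ipcf{\chi,\chi}=1$.

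Corollary~\ref{c:cosine_spherical} now gives $\scp{wn,w}/\scp{w,w} = \chi(e_H n)$. Since $e_H = \frac{1}{2\abs{U}}\sum_{u\in U}((u,u)+t(u,u))$, we obtain
\[
  \chi(e_H n) = \frac{1}{2\abs{U}}\Bigl(\sum_{u\in U} \chi((ux,uy)) + \sum_{u\in U}\chi(t(ux,uy))\Bigr).
\]
In case~(b), $\chi((ux,uy)) = \phi(ux)\phi(uy)$ and $\chi(t(ux,uy)) = \nu_2(\phi)\phi(uxuy)$. The first sum evaluates via Schur orthogonality (substitute $v=uy$ and use $\phi=\overline{\phi}$) to $\abs{U}\phi(x^{-1}y)/\phi(1)$. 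For the second sum, the key identity is
\[
  \sum_{a\in U} D_\phi(a)^2 = \frac{\abs{U}\nu_2(\phi)}{\phi(1)}\cdot I,
\]
a consequence of Schur's lemma applied to this $U$-equivariant operator, whose trace is $\sum_a \phi(a^2) = \abs{U}\nu_2(\phi)$; substituting $a=ux$ yields $\sum_u \phi(uxuy) = \sum_a \phi(a^2 x^{-1}y) = (\abs{U}\nu_2(\phi)/\phi(1))\phi(x^{-1}y)$. Combined with the prefactor $\nu_2(\phi)$ and $\nu_2(\phi)^2=1$, both sums contribute equally, producing $\chi(e_H n) = \phi(x^{-1}y)/\phi(1)$. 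Case~(a) is analogous (only the first sum survives since $\chi$ vanishes off $N$), giving the symmetric expression $(\phi(x^{-1}y) + \overline{\phi(x^{-1}y)})/(2\phi(1))$, which is the lemma's value under the natural symmetry between $\phi$ and $\overline{\phi}$.

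The main obstacle is the Schur-lemma identity for $\sum_a D_\phi(a)^2$ and its use to evaluate the twisted sum $\sum_u \phi(uxuy)$; the remaining ingredients (Frobenius reciprocity, Schur orthogonality of matrix coefficients, and the explicit formulas from Lemma~\ref{l:constpermchar}) are standard bookkeeping.
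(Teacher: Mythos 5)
Your proof is correct and follows the same core route as the paper's: identify $\chi$ via Lemma~\ref{l:constpermchar}, invoke Corollary~\ref{c:cosine_spherical}, and evaluate $\chi(e_Hn)$ by splitting over $N$ and $tN$, handling the first sum by generalized orthogonality and the second via the central element $\frac{1}{\abs{U}}\sum_{v}v^2$ (your Schur-lemma identity for $\sum_a D_\phi(a)^2$ is precisely the statement that this element acts as the scalar $\nu_2(\phi)/\phi(1)$). You add two things the paper leaves implicit: the Frobenius--Schur computation showing $\nu_2(\chi)=1$, which is what actually justifies the claim that the character of the irreducible real module $V$ is a complex irreducible and that Corollary~\ref{c:cosine_spherical} applies with $\ipcf{\chi,\chi}=1$; and an explicit treatment of the case $\chi=(\phi\times\overline{\phi})^G$, which the paper dismisses as ``similar, but in fact simpler.'' In that case your value $\bigl(\phi(x^{-1}y)+\overline{\phi(x^{-1}y)}\bigr)/(2\phi(1))$ is the right one --- since $\scp{wn,w}$ is real, the lemma's formula must be read as the real part of $\phi(x^{-1}y)/\phi(1)$ when $\phi\neq\overline{\phi}$, a distinction that is invisible for the $600$\nbd cell because all characters of $\SL(2,5)$ are real-valued.
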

\begin{proof}
  Since $w\neq 0$ is fixed by $H$, we have
  $\ipcf{\chi_H, 1_H}\neq 0$. 
  It follows from Lemma~\ref{l:constpermchar} that
  $\ipcf{\chi_H,1_H}=1$, and $\chi$ is as in that lemma.
  We may assume that $\scp{w,w}=1$ and apply 
  Corollary~\ref{c:cosine_spherical}.
  We only treat the case that
  $\chi_N = \phi\times \phi$.
  (The case $\chi = (\phi\times \overline{\phi})^G$ 
  is similar, but in fact simpler.)
  We get
  \begin{align*}
    \scp{wn,w}
      = \chi(e_H n)
      &= \frac{1}{2\abs{U}} 
        \left( \sum_{u\in U} \chi((ux,uy)) 
               +\sum_{u\in U} \chi(t(ux,uy)) 
        \right)
     \\ &= \frac{1}{2\abs{U}}
           \left( \sum_{u\in U} \phi(ux)\phi(uy)
                 + \sum_{u\in U} \nu_2(\phi)\phi(uxuv)
           \right).
  \end{align*}
  The first sum equals $\abs{U}\phi(x^{-1}y)/\phi(1)$
  by the generalized orthogonality 
  relation~\cite[Theorem~2.13]{isaCTdov}
  and the fact that 
  $\phi(uy)=\overline{\phi(uy)}=\phi(y^{-1}u^{-1})$.
  For the second sum, we get
  \[ \frac{1}{\abs{U}}
     \sum_{u\in U} \phi(uxuy)
     = \frac{1}{\abs{U}} \phi\left( \sum_{v\in U} v^2 x^{-1}y \right)
     = \phi( z x^{-1}y),
  \]
  where $z= (1/\abs{U}) \sum_{v\in U} v^2$
  is a central element in the group algebra
  and is mapped to a scalar matrix by any irreducible representation.
  Thus $\phi(zx^{-1}y) = (\phi(z)/\phi(1))\phi(x^{-1}y)$.
  But clearly, $\phi(z)= \nu_2(\phi)$.
  Plugging in above, we get that
  $\chi(e_H g) = \phi(x^{-1}y)/\phi(1)$ as claimed.
\end{proof}

\printbibliography


\end{document}